\newtheorem{thm}{Theorem}[section]
\newtheorem{ex}[thm]{Example}
\newtheorem{coro}[thm]{Corollary}
\newtheorem{lem}[thm]{Lemma}
\newtheorem{defn}[thm]{Definition}
\newtheorem{rem}[thm]{Remark}
\newtheorem{prop}[thm]{Proposition}
\newcommand{\levy}{{L\'evy }}
\def\1{\mathds{1}}
\def\R{\mathbb{R}}
\def\P{\mathbb{P}}
\def\E{\mathbb{E}}
\def\N{\mathbb{N}}
\def\S{\mathbb{S}}
\def\d{\,\mathrm{d}}
\def\dd{\mathrm{d}}
\def\var{\mathrm{Var}}
\def\cov{\mathrm{Cov}}
\def\b{\beta}
\def\s{\sigma}
\def\law{\overset{\textnormal{law}}{=}}
\def\tp{\top}
\def\F{\mathcal{F}}
\def\H{\mathcal{H}}
\def\Z{\mathcal{Z}}
\def\t{\tau}
\begin{document}

\title{Modulated Information Flows in Financial Markets}
\author{Edward Hoyle$^1$, Andrea Macrina$^{2,\, 3
}$ and Levent Ali Meng\"ut\"urk$^{2}$ \\ \\
$^{1}$AHL Partners LLP, Man Group plc \\ London EC4R 3AD, U.K.\\
$^{2}$Department of Mathematics, University College London \\ London WC1E 6BT, U.K.\\
$^{3}$African Institute of Financial Markets \& Risk Management \\ University of Cape Town, Rondebosch 7701, RSA}
\maketitle

\begin{abstract}
We model continuous-time information flows generated by a number of information sources that switch on and off at random times. By modulating a multi-dimensional \levy random bridge over a random point field, our framework relates the discovery of relevant new information sources to jumps in conditional expectation martingales. In the canonical Brownian random bridge case, we show that the underlying measure-valued process follows jump-diffusion dynamics, where the jumps are governed by information switches. The dynamic representation gives rise to a set of stochastically-linked Brownian motions on random time intervals that capture evolving information states, as well as to a state-dependent stochastic volatility evolution with jumps. The nature of information flows usually exhibits complex behaviour, however, we maintain analytic tractability by introducing what we term the effective and complementary information processes, which dynamically incorporate active and inactive information, respectively. As an application, we price a financial vanilla option, which we prove is expressed by a weighted sum of option values based on the possible state configurations at expiry. This result may be viewed as an information-based analogue of Merton's option price, but where jump-diffusion arises endogenously. The proposed information flows also lend themselves to the quantification of asymmetric informational advantage among competitive agents, a feature we analyse by notions of information geometry.
\end{abstract}
{\bf Keywords:} Information-based theory, jump-diffusion, point processes, stochastic volatility, asymmetric information, $f$-divergencies.

\vspace{.25cm}
\noindent Corresponding author: a.macrina@ucl.ac.uk
\section{Introduction}
Information flows, especially in social and communication systems, usually exhibit complex structures and behaviours. Through time, not only new information channels may suddenly appear---e.g. following a new politician or a scientist in Twitter---but also some of the existing information sources may cease---e.g. an orbiting satellite stops operating or a national statistical agency stops reporting particular economic data series---and some information may only be temporarily interrupted over random periods of time. In general, if relevant information arrives gradually and continuously in time, then its impact on an observer's inference is small over short time periods. However, if important news arrives sporadically or new information sources appear at discrete points in time (e.g. a relevant data vendor becomes available to the system), then it is reasonable to expect that the impact of new information on the observer's inference is substantial. Such a view was taken in the work of \cite{19} for financial markets, where we find: {\it ``By its very nature, important information arrives only at discrete points in time. This component is modelled by a jump process reflecting the non-marginal impact of the information."} Recent examples of the new information source phenomenon in the financial markets are Elon Musk's tweet about taking Tesla private in 2018, including following announcements and developments, and the fall in the Swatch stock price when the Swiss National Bank stopped defending the EUR-CHF currency floor in January 2015. Another incident of a similar nature is the effect on Snapchat's share price following tweets by Kylie Jenner. A financial example for interrupted information is that of exchanges occasionally halting the trading of securities, following large price moves, with \emph{circuit breakers} for stocks and \emph{limit-up} pricing rules for commodity futures.

The general situation we keep in mind is one where observers or agents in a given system assess a signal on the basis of varying information sets, e.g. various news channels/feeds, accessible to them at any point in time. We develop a stochastic approach for information flows that can dynamically encapsulate information sources to switch on and off at random times through a modulated filtration. We show that intelligence, produced on the basis of such information flows, features explicit dependence on the number and the specific information channels that are active or inactive over the course of the inference period. While the approach we develop can be applied broadly in signal processing, stochastic filtering and related fields, our main aim is to systematically and explicitly relate randomly evolving information states to price dynamics, which provide a mathematically rich avenue of possible scenarios. Another area, where our framework may offer new insights, is the modelling of election dynamics where modulated information flows may impact the outcome of a democratic election. Here we refer to \cite{3a}.

We give an example in financial markets on price formation, which is a complicated mechanism. Market agents will buy or sell a given asset at a given price if suitably incentivised. For market makers, the incentive may not be to profit from anticipated price changes or income, but instead to earn the bid-ask spread or commissions. Although investors often seek to profit from anticipated price changes and income, they may have other incentives to trade that are not directly linked to these, including risk management (to increase, decrease or hedge risk), an evolving mandate (such as a tightening of environmental, social and governance criteria), a change in risk aversion, and a change in wealth.
However, it is commonly understood that the price of an asset should reflect the information that the market has about the asset. 
Consider the situation of a government considering the bailout of a bank or insurance company (the firm).
The market may be aware of the firm being in financial difficulty, and this would then be reflected in the prices of the firm's stock and bonds.
The market may also anticipate a government bailout.
However, due to insider trading legislation, any bailout negotiations between the government and the firm may be held behind closed doors.
The market cannot be sure of the bailout until an official government announcement.
We can view the government's communication of a bailout with the market as new source of information, which may have certain stylized effects on prices.
One such effect is an initial price correction as the market converts a bailout possibility into a certainty; a significant---and immediate---adjustment in the price.
The correction is the market absorbing the totality of the private bailout negotiations in one gulp.
Salient details for prices would include the bailout's anticipated size and duration.
A second effect is a series of small price adjustments as the government regularly reports on the progress of its bailout.
This may continue until either the firm recovers and no longer requires government assistance (loans the government has made the firm are repaid, equity the government has bought in the firm is sold), the firm is wholly taken over by the government, or the firm is allowed to fail.
At this point the source of market information on the firm ceases.

In order to explicitly model the flow of information, we introduce a multivariate process---each marginal being a \levy random bridge (LRB, see, \cite{15})---modulated over a random point field dynamically acting on its coordinates. LRBs form a large class of stochastic processes---where \levy processes are special cases---and the modulation allows for random activation and deactivation of the information flow. More general stochastic bridge processes is the class of randomized Markov bridges (RMB), which are constructed in \cite{18a}. As a canonical subclass, we focus on Brownian random bridges (BRBs) and prove a dynamical representation for the conditional expectation process under the proposed modulated filtration, which offers a new stochastic differential equation to model price dynamics under information switches. Let $X$ denote some square-integrable random variable (e.g. future payoff) and $(\boldsymbol{\xi}_t)_{t\in[0,T]}\in\R^n$ for some finite $T$ where each $(\xi^{(i)}_t)$ is an LRB with $\xi^{(i)}_{T}\law X$ for $i=1,\ldots,n$. Next, we choose some c\`adl\`ag jump process $(\boldsymbol{\tilde{J}}_t)_{t\in[0,T]}\in\{0,1\}^n$ associated to some random point field, and define the $\sigma$-algebra
\begin{align}
\F_t = \s\left( (\boldsymbol{\tilde{J}}_u \otimes \,\boldsymbol{\xi}_u)_{0\leq u \leq t}, (\boldsymbol{\tilde{J}}_u)_{0\leq u \leq t}  \right). \nonumber
\end{align}
If we introduce $\mathcal{J}_t = \{i : \tilde{J}_t^{(i)} = 1\}$, $C_t = \sum_{s\leq t} \1\{\boldsymbol{\tilde{J}}_{s}\ne\boldsymbol{\tilde{J}}_{s-}\}$ and $\pi_j = \inf\{t:C_t=j\}$, we have the following result, which we shall prove after later in the paper.
\begin{prop}
\label{ExpectationSDEinit}
Let $(\xi^{(i)}_t)$ be a BRB for $i=1,\ldots,n$. Then, the $(\F_t)$-martingale $(X_t)_{[0,T]}$ defined by $X_t:=\mathbb{E}[X \,|\, \F_t]$ admits the dynamic representation
\begin{align*}
	X_t &= \E[X] + \sum_{j=1}^{C_t}\left(\int_{\pi_{j-1}}^{\pi_j} \theta_s  \d W_s^{(j)}\right)\1\{\mathcal{J}_{\pi_{j-1}}\neq\emptyset \} +\left(\int_{\pi_{C_t}}^{t} \theta_s \d W_s^{(C_t+1)}\right)\1\{\mathcal{J}_{\pi_{C_t}}\neq\emptyset \} + \sum_{s\leq t}\Delta X_s,
\end{align*}
for $t<T$, where $(W_t^{(j)})$ for $j=1,\ldots,C_t+1$ is a one-dimensional standard $(\F_t)$-Brownian motion between jump times, $(\theta_t)$ is an $(\F_t)$-adapted process, and $\Delta X_t\neq 0$ if and only if $\mathcal{J}_t \backslash \mathcal{J}_{t-} \ne \emptyset$.
\end{prop}
Thus, a welcome consequence of our approach is an information-based {\it endogenous} jump-diffusion model with state-dependent stochastic volatility dynamics, arising naturally from the proposed information system. In this sense, we also extend \cite{4, 5, 6, 7, 16, 21}, which employ special properties of Brownian bridges (see \cite{12}, \cite{13}). We shall also show that the processes $(W_t^{(j)})$ above are stochastically-linked, where their dependence is dynamically controlled by $(\mathcal{J}_t)$. 

The modulation framework enables one to derive a Feynman-Ka\'c representation of the conditional expectation, gives an alternative expression for its jump-sizes, and extends Proposition \ref{ExpectationSDEinit} to the case of multiple random point fields. We associate these random point fields with projection-valued stochastic processes to construct a more general information system which may incorporate a wide range of complex behaviour, such as what we term \emph{information mixing}. By use of concepts in information geometry, we highlight how the impact of new information sources can be quantified from a geometrical perspective, which later allows for the modelling of information asymmetry. We also manage to provide an analytical expression for the price of a vanilla option under the modulated filtration, as an information-based analogue of the price obtained by \cite{19}, that admits a much broader class of random counting measures dictating price jumps.

We highlight the fact that we do not need to introduce jumps into the information flow by embedding discontinuous noise into individual information processes. 
In the BRB case, the jumps arise due to the discovery of \textit{new} information sources, even if BRBs have a \textit{continuous} state-space.
This has different and rather important implications on the dynamics of $(X_t)$, as opposed to including independent discontinuous noise with no information content on $X$. First, jumps are caused by random changes in the number of active information sources; hence, jumps carry information about $X$.
Second, the continuous part of $(X_t)$ is driven by different Brownian motions on random time intervals that capture the possible state-configurations of the information flow. Third, since the volatility process also jumps and is state-dependent, the framework offers a link to regime-switching models; see \cite{8, 14, 20}. Fourth, the undiscounted price process may remain constant for periods of time if all information sources are ``lost'', which perhaps could be viewed as a model for certain features arising in illiquid markets or circuit breakers.
Fifth, the proposed framework offers links to progressive enlargements of filtrations and stopped filtrations.

\section{Modulated information processes} \label{sec:framework}
Let $(\Omega, \mathcal{G}, (\mathcal{G}_t)_{t\geq0},\P)$ be a probability space equipped with a filtration $(\mathcal{G}_t)$. 
All considered filtrations are assumed to be right-continuous and complete. 
We introduce a random variable $X\in\mathcal{L}^2(\Omega,\mathcal{G},\P)$ with law $\nu$ and state-space $(\mathbb{X},\mathcal{B}(\mathbb{X}))$, where we assume $\mathbb{X}\subseteq\mathbb{R}$. 
We consider the time interval $\mathbb{T}=[0,1]$ --- and the set $\mathbb{T}_{*}=[0,1)$ --- though it is straightforward to consider a compact interval $[0,T]$ for $T<\infty$, instead. We let $(\boldsymbol{L}_t)_{t\geq0}$ be a multivariate \levy process taking values in $\R^n$ for $n\in\mathbb{N}_+$, with mutually independent coordinates (also independent of $X$) such that each $\{L^{(i)}_{t}\}_{i=1,\ldots,n}$ has density $0<f_t<\infty$ for all $t\in(0,1]$. We assume $\nu$ concentrates mass where $f_{1}$ is positive and finite $\nu$-a.s. Next we introduce a $(\mathcal{G}_t)$-adapted multivariate stochastic process $(\boldsymbol{\xi}_t)_{t\in\mathbb{T}}$ taking values in $\R^n$, where each marginal $(\xi^{(i)}_t)$ for $i=1,\ldots,n$ is a \levy random bridge (LRB) satisfying: (i) $\xi^{(i)}_{1}$ has marginal law $\nu$, (ii) for all $m\in\mathbb{N}_{+}$, every $0<t_{1}<\ldots<t_{m}<1$, every $(y_{1},\ldots,y_{m})\in\mathbb{R}^{m}$, and $\nu$-a.e. $x$,
\begin{align}
	\mathbb{P}(\xi^{(i)}_{t_{1}}\leq y_{1}^{(i)},\ldots, \xi^{(i)}_{t_{m}}\leq y_{m}^{(i)} | \xi^{(i)}_{1}=x)=\mathbb{P}(L^{(i)}_{t_{1}}\leq y_{1}^{(i)},\ldots, L^{(i)}_{t_{m}}\leq y_{m}^{(i)} | L^{(i)}_{1}=x), \nonumber
\end{align}
for $i=1,\ldots,n$. We denote the conditional measure as $g(x,y^{(i)},t)\dd y^{(i)} = \mathbb{P}(\xi^{(i)}_{t}\in \dd y^{(i)} | \xi^{(i)}_{1}=x)$ when it exists. The finite-dimensional distribution of each $(\xi^{(i)}_t)$ is
\begin{align}
\mathbb{P}(\xi^{(i)}_{t_{1}}\in \dd y_{1}^{(i)},\ldots, \xi^{(i)}_{t_{m}}\in \dd y_{m}^{(i)}, \xi^{(i)}_{1}\in \dd x)=\prod_{i=1}^m (f_{t_i-t_{i-1}}(y_i-y_{i-1})\dd y_i)\frac{f_{1-t_m}(x-y_m)}{f_1(x)}\nu(\dd x). \nonumber
\end{align}
The distributions of LRBs with discrete state-spaces can similarly be written using their probability mass functions. We note that LRBs are constructed in \cite{15} where, aside the development of the theory, applications in finance are discussed, in detail. In what follows, we use $(\boldsymbol{\xi}_t)$ to model information flows on $X$. Note that $X$ can be any square-integrable random variable and can be modelled (and simulated) independently from the chosen \levy process. For example, let $F:\mathcal{D}([0,1],\mathbb{R})\rightarrow\mathbb{R}$ be a functional from the Skorokhod space $\mathcal{D}([0,1],\mathbb{R})$ to the real line. Then, for some real-valued semimartingale $(S_t)$ adapted to $(\mathcal{G}_t)$, we can define $X=F((S_u);0\leq u \leq 1)$ as a model for a path-dependent signal. 

Next we introduce a mechanism for the activation and deactivation of information sources. Let $\zeta$ be a random point field (a point process) on $\mathbb{T}^n$, independent of $(\boldsymbol{\xi}_t)$, generating a collection of times $\{\varpi^{i}_{1}, \ldots,\varpi^{i}_{k_i}\}$ for $i\in\{1,\ldots,n\}$ and finite $k_i\in\mathbb{N}_0$. For every $i$, we associate the random sequence $\{\varpi^{i}_{1}, \ldots,\varpi^{i}_{k_i}\}$ to a coordinate of a $(\mathcal{G}_t)$-adapted c\`adl\`ag jump process $(\boldsymbol{\tilde{J}}_t)$ with state space $\S=\{0,1\}^n$. We then define the $\R^{n\times n}$-valued process $(\boldsymbol{J}_t)$ by
$J_t^{(i,j)} = \delta_{ij}\boldsymbol{\tilde{J}}_t$.
Thus, $(\boldsymbol{J}_t)$ is a diagonal matrix-valued process indicating which coordinates of $(\boldsymbol{\xi}_t)$ are active through the modulated information process $(\boldsymbol{J_t}\,\boldsymbol{\xi}_t)$. 
When the $i$th information source is inactive, the $i$th coordinate of $(\boldsymbol{J}_t\,\boldsymbol{\xi}_t)$ is identically zero.
We define a sub-algebra $\F_t\subseteq \mathcal{G}_t$ by
\begin{equation}\label{subF} 
\F_t = \s\left( (\boldsymbol{\tilde{J}}_u \otimes \,\boldsymbol{\xi}_u)_{0\leq u \leq t}, (\boldsymbol{\tilde{J}}_u)_{0\leq u \leq t}, X\1_{\{t=1\}}  \right), \nonumber
\end{equation} 
for $t\in\mathbb{T}$. Surely, we can write $\F_t = \s\left( (\boldsymbol{J}_u\,\boldsymbol{\xi}_u)_{0\leq u \leq t}, (\boldsymbol{J}_u)_{0\leq u \leq t}, X\1_{\{t=1\}}  \right)$. The reason why we diagonalize $(\boldsymbol{\tilde{J}}_t)$ will be clear later in Section \ref{modprojsec}. We also note that we add $X\1_{\{t=1\}}$ to the filtration to ensure that its value is revealed at $t=1$, even though all information may be inactive at $t=1$. This is not a mathematical requirement; if an envisaged application does not need $X$ to be (fully) observed at $t=1$, then $X\1_{\{t=1\}}$ may be excluded from the algebra $\mathcal{F}_t$. In the remainder of this paper, we shall omit it from the expressions unless necessary.
\begin{rem}
Define $\F^*_t = \s\left( (\boldsymbol{\tilde{J}}_u \otimes \,\boldsymbol{\xi}_u)_{0\leq u \leq t}, X\1_{\{t=1\}}  \right)$, hence, $\F_t^*\subseteq \mathcal{F}_t$. The $(\F_t)$-adapted switching process $(\boldsymbol{\tilde{J}}_t)$ is also $(\F^*_t)$-adapted if and only if $(\boldsymbol{\xi}_t)$ is continuous and $\P[\xi_t^{(i)}=0]=0$ for all $i\in\{1,\ldots, n\}$ for $t\in(0,1]$. 
\end{rem}
Hence, the individual appearance of $(\boldsymbol{\tilde{J}}_t)$ in the definition of $\F_t$ is not superfluous given that the framework allows the coordinate $(\xi^{(i)}_t)$ to be a jump process or such that $0< \P[\xi_t^{(i)}=0]<1$, for some $i\in\{1,\ldots, n\}$ and $t\in\mathbb{T}$. In these cases, it may not be possible to detect the (de)activation of a particular information coordinate if $(\boldsymbol{\tilde{J}}_t)$ is not $(\F_t)$-adapted.  

At time $t\in\mathbb{T}$, we denote the last time that the $i$th information source was active by $\t^{(i)}_t$.
That is, we define
\[ \t^{(i)}_t = 0 \vee \sup\{ u : J_u^{(i,i)} = 1, u\in[0,t]\}, \]
for $i=1,\ldots,n$, where we adopt the convention that $\sup \emptyset = -\infty$.
Thus, the process $(\t^{(i)}_t)$ is increasing and progressively measurable with respect to $(\F_t)$, and if the $i$th process has never been active up until time $t$, then $\t^{(i)}_t=0$. Also, by definition, the initial condition is $\t^{(i)}_0=0$ in either case, when $J_0^{(i,i)} = 0$ or $J_0^{(i,i)} = 1$. 
\begin{prop} \label{rem1}
The dynamics of $(\t^{(i)}_t)$ for $t\in\mathbb{T}$ are given by
\[ \dd \t^{(i)}_t = J_t^{(i,i)}\dd t + (t-\t^{(i)}_{t-}) \dd J^{(i,i)}_t. \]
\end{prop}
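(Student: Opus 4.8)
The plan is to prove the stated dynamics in their equivalent integrated form,
\[
\t^{(i)}_t = \int_0^t J_u^{(i,i)}\,\dd u + \int_{(0,t]} (u-\t^{(i)}_{u-})\,\dd J^{(i,i)}_u,
\]
by a pathwise argument. Writing $K_u = J_u^{(i,i)}$ for brevity, each realisation of $K$ is a $\{0,1\}$-valued c\`adl\`ag step function with finitely many switching times $0< s_1<\cdots<s_m<1$, so both integrals are elementary Lebesgue and Lebesgue--Stieltjes objects and $(\t^{(i)}_t)$ is increasing of finite variation. The two elementary facts I would establish first are: (i) whenever $K_t=1$ one has $\t^{(i)}_t=t$, because then $t$ itself belongs to $\{u\le t: K_u=1\}$, forcing the supremum to equal $t$; and (ii) on any open interval on which $K\equiv 0$ the set $\{u\le t:K_u=1\}$ does not change with $t$, so $\t^{(i)}$ is constant there.

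From (i) and (ii) the absolutely continuous part is immediate: on the active stretches $\{K=1\}$ the path satisfies $\t^{(i)}_t=t$ and hence increases at unit rate, while on the inactive stretches it is flat; the continuous part of $(\t^{(i)}_t)$ therefore has Radon--Nikodym derivative $K_t$ with respect to Lebesgue measure, i.e. equals $\int_0^t K_u\,\dd u$. This accounts for the first term.

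The jumps of $\t^{(i)}$ occur only at the switching times, and here lies the one point requiring care, namely the evaluation of the left limit $\t^{(i)}_{s-}$. At an activation time ($\D K_s=K_s-K_{s-}=+1$): just before $s$ we have $K=0$, so $\t^{(i)}_{s-}$ is the frozen value carried over from the previous active period, whereas just after, (i) gives $\t^{(i)}_s=s$; thus $\D\t^{(i)}_s=s-\t^{(i)}_{s-}=(s-\t^{(i)}_{s-})\,\D K_s$. At a deactivation time ($\D K_s=-1$): by the c\`adl\`ag property $K$ equals $1$ on a left neighbourhood of $s$, so (i) gives $\t^{(i)}_u=u$ for $u\uparrow s$ and hence $\t^{(i)}_{s-}=s$; consequently $\t^{(i)}$ does not jump, and the candidate contribution $(s-\t^{(i)}_{s-})\,\D K_s=(s-s)(-1)=0$ vanishes in agreement. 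Hence at every switching time $\D\t^{(i)}_s=(s-\t^{(i)}_{s-})\,\D K_s$, so that the pure-jump part equals $\sum_{s\le t}(s-\t^{(i)}_{s-})\,\D K_s=\int_{(0,t]}(u-\t^{(i)}_{u-})\,\dd K_u$.

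Adding the continuous and jump parts, together with the initial condition $\t^{(i)}_0=0$ which matches the vanishing of both integrals at $t=0$, yields the integral identity and hence the differential form. The only genuine obstacle is the careful bookkeeping of the left limits at switching times---in particular recognising that $\t^{(i)}_{s-}=s$ at deactivations is precisely what makes the single expression $(u-\t^{(i)}_{u-})\,\dd J^{(i,i)}_u$ cover both jump directions; once this is observed, the remaining steps are routine. It is also worth recording that, since $K$ is adapted and $(\t^{(i)}_t)$ is progressively measurable (as already noted), no integrability issue arises and the identity holds $\P$-a.s. pathwise.
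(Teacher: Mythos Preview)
Your proof is correct and follows essentially the same approach as the paper's own argument: a pathwise decomposition into the absolutely continuous part (unit rate on $\{K=1\}$, flat on $\{K=0\}$) and the jump part (checking activation and deactivation separately, with the key observation that $\t^{(i)}_{s-}=s$ at deactivations makes that contribution vanish). Your write-up is more careful---working in integrated form and verifying both jump directions explicitly---but the underlying idea is identical to the paper's.
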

\begin{proof}
Per coordinate $i$, the dynamics of $(\tau^{(i)}_t)$ can be decomposed into its continuous and discontinuous parts, where $\dd J^{(i,i)}_t=0$ or $\dd J^{(i,i)}_t=1$, respectively. Given that $(J^{(i,i)}_t)$ has state-space $\{0,1\}$, we have $\tau^{(i)}_t=t$ whenever $J^{(i,i)}_t=1$ and $\tau^{(i)}_t<t$ whenever $J^{(i,i)}_t=0$. Hence, for the continuous part, when $J^{(i)}_t=1$, $\dd \tau^{(i)}_t=\dd t$, and when $J^{(i,i)}_t=0$, $\dd \tau^{(i)}_t=0$. As for the discontinuous part, just before a jump, if $J^{(i,i)}_{t-}=1$, then $\dd \tau^{(i)}_t=\dd t$ since $\tau^{(i)}_{t-}=t$. If just before a jump, $J^{(i,i)}_{t-}=0$, then $\dd \tau^{(i)}_t=(t-t^*)$ given that $\tau^{(i)}_{t-}=t^*$ for some $t^*<t$ and $\tau^{(i)}_t=t$.
\end{proof}
For our purposes, we shall prove that the conditional distribution of $X$ given $\F_t$ can be determined through time-changed LRBs.
\begin{prop} \label{lemmaMarkov}
Let $\xi^{(i)}(u)$ be the value of $(\xi^{(i)}_t)$ at $u\in\mathbb{T}$.
\begin{enumerate}
\item	For any $A\in\mathcal{B}(\mathbb{X})$ and $t_i \in \mathbb{T}_*$,
	\[ \P\left[X\in A \left|\, \{(\xi^{(i)}_s)_{0\le s\leq t_i}\}_{i=1,\ldots,n} \right.\right] 
			= \P\left[X\in A \left|\, \{\xi^{(i)}(t_i)\}_{i=1,\ldots,n} \right.\right]. \]
\item The sigma-algebra $\F_t$ for any $t\in\mathbb{T}$ is equal to 
\[ \F_t = \s\left( \{(\xi^{(i)}(\t^{(i)}_u))_{0\leq u \leq t}\}_{i=1,\ldots,n}, (\boldsymbol{J}_u)_{0\leq u \leq t} ,X\1_{\{t=1\}}\right).\]
\end{enumerate}
\end{prop}
\begin{proof} 
For the first part, for $t<1$, it is sufficient to show that
\begin{align*}
			\P\left[ X \in \d x_0 \left|\, \xi^{(1)}_{t_{1,1}} = x_{1,1}, \ldots, \xi^{(1)}_{t_{1,k_1}} = x_{1,k_1}, \ldots, \xi^{(n)}_{t_{n,1}} = x_{n,1}, \ldots, \xi^{(n)}_{t_{n,k_n}} = x_{n,k_n}\right.\right]&
			\\=\P\left[ X \in \d x_0 \left|\,  \xi^{(1)}_{t_{1,k_1}} = x_{1,k_1}, \ldots, \xi^{(n)}_{t_{n,k_n}} = x_{n,k_n}\right.\right]&,
\end{align*}
for all $k_i \in \N_+$ where $i\in\{1,\ldots,n\}$, all $0 < t_{i,1} < \cdots < t_{i,k_i} < 1$, and all $(x_{i,1}, \ldots,x_{i,k_i})\in \mathbb{R}^n$. We have
\begin{align*}
			&\P\left[ X \in \d x_0 \left|\, \xi^{(1)}_{t_{1,1}} = x_{1,1}, \ldots, \xi^{(1)}_{t_{1,k_1}} = x_{1,k_1}, \ldots, \xi^{(n)}_{t_{n,1}} = x_{n,1}, \ldots, \xi^{(n)}_{t_{n,k_n}} = x_{n,k_n}\right.\right]
			\\
				&=\frac{\P\left[ \left. \bigcap_{i=1}^{n} \bigcap_{j=1}^{k_i} \xi^{(i)}_{t_{i,j}} \in\dd x_{i,j} \,\right| X = x_0\right] \P[X\in \dd x_0]}
						{\int_{\mathbb{X}} \P\left[ \left. \bigcap_{i=1}^{n} \bigcap_{j=1}^{k_i} \xi^{(i)}_{t_{i,j}} \in\dd x_{i,j} \,\right| X = x_0\right] \P[X\in \dd x_0]}&
			\\
				&=\frac{\prod_{i=1}^{n} \P\left[ \left. \bigcap_{j=1}^{k_i} \xi^{(i)}_{t_{i,j}} \in\dd x_{i,j} \,\right| X = x_0\right] \P[X\in \dd x_0]}
						{\int_{\mathbb{X}} \prod_{i=1}^{n} \P\left[ \left. \bigcap_{j=1}^{k_i} \xi^{(i)}_{t_{i,j}} \in\dd x_{i,j} \,\right| X = x_0\right] \P[X\in \dd x_0]}.&
\end{align*}
Given $X$, each coordinate process $(\xi^{(i)}_t)$ is a \levy bridge. It follows that
	\begin{equation*} 
	\P\left[ \left. \bigcap_{j=1}^{k_i} \xi^{(i)}_{t_{i,j}} \in\dd x_{i,j} \,\right| X = x_0\right]
				= \frac{f_{1-t_{i,k_i}}(x_0-x_{i,k_i})}{f_1(x_0)} \prod_{j=1}^{k_i} f_{t_{i,j}-t_{i,j-1}}(x_{i,j}-x_{i,j-1})\dd x_{i,j},
	\end{equation*}
where $f_t(x)$ is the marginal density function of the underlying \levy process. 

Hence, we have the following:
	\begin{align*}
			&\P\left[ X \in \d x_0 \left|\, \xi^{(1)}_{t_{1,1}} = x_{1,1}, \ldots, \xi^{(1)}_{t_{1,k_1}} = x_{1,k_1}, \ldots, \xi^{(n)}_{t_{n,1}} = x_{n,1}, \ldots, \xi^{(n)}_{t_{n,k_n}} = x_{n,k_n}\right.\right]
			\\ 
					&\hspace{6cm}=\frac{\prod_{i=1}^n \frac{f_{1-t_{i,k_i}}(x_0-x_{i,k_i})}{f_1(x_0)}f_{t_{i,k_i}}(x_{i,k_i}) \, \nu(\dd x_0)}
								{\int_{\mathbb{X}} \prod_{i=1}^n \frac{f_{1-t_{i,k_i}}(x_0-x_{i,k_i})}{f_1(x_0)}f_{t_{i,k_i}}(x_{i,k_i}) \, \nu(\dd x_0)}&
			\\
					&\hspace{6cm}=\frac{\prod_{i=1}^n \P\left[ \left. \xi^{(i)}_{t_{i,k_i}} \in\dd x_{i,k_i} \,\right| X = x_0\right] \P[X\in \dd x_0]}
						{\int_{\mathbb{X}} \prod_{i=1}^n  \P\left[ \left. \xi^{(i)}_{t_{i,k_i}} \in\dd x_{i,k_i} \,\right| X = x_0\right] \P[X\in \dd x_0]}&
			\\ 
					&\hspace{6cm}=\frac{\P\left[ \left. \bigcap_{i=1}^{n} \xi^{(i)}_{t_{i,k_i}} \in\dd x_{i,k_i} \,\right| X = x_0\right] \P[X\in \dd x_0]}
						{\int_{\mathbb{X}} \P\left[ \left. \bigcap_{i=1}^{n}\xi^{(i)}_{t_{i,k_i}} \in\dd x_{i,k_i} \,\right| X = x_0\right] \P[X\in \dd x_0]}
			\\
					&\hspace{6cm}=\P\left[ X \in \d x \left|\,  \xi^{(1)}_{t_{1,k_1}} = x_{1,k_1}, \ldots, \xi^{(n)}_{t_{n,k_n}} = x_{n,k_n}\right.\right].&
	\end{align*}
For the second part, if $\P[\xi_t^{(i)}=0]=0$ for any $t\in(0,1]$, the coordinate $(J_t^{(i,i)} \xi^{(i)}_t)$ and $(\xi^{(i)}(\t^{(i)}_t))$ differ only when the $i$th source is inactive, during which the coordinate process is zero, and $(\xi^{(i)}(\t^{(i)}_t))$ takes the source's last active value. The statement holds directly for the complementary case where $\P[\xi_t^{(i)}=0]>0$ for any $t\in(0,1]$.
\end{proof}

\begin{prop} \label{conditionaldist}
The $\F_t$-conditional distribution of $X$ is given by
	\begin{align*}
		\P[X \in \dd x \,|\, \F_t] 
			 &= \frac{\prod_i g\left(x, \xi^{(i)}(\t^{(i)}_t), \t^{(i)}_t\right) \nu(\dd x)}
									{\int_{\mathbb{X}} \prod_i g\left(x, \xi^{(i)}(\t^{(i)}_t), \t^{(i)}_t \right) \nu(\dd x)},
	\end{align*}
	for $t \in \mathbb{T}_{*}$ given that $g(x,y,t)$ exists.
\end{prop}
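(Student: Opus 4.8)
The plan is to reduce the conditional law to a finite-dimensional Bayesian calculation by combining the two reductions already supplied by Lemma \ref{lemmaMarkov} with the independence of the point field $\zeta$ from $(\xib_t)$ and $X$. First I would invoke part (2) of Lemma \ref{lemmaMarkov} to replace $\F_t$ (for $t<1$ the indicator $X\1_{\{t=1\}}$ is absent) by $\s\big(\{(\xi^{(i)}(\t^{(i)}_u))_{0\le u\le t}\}_{i},(\boldsymbol{J}_u)_{0\le u\le t}\big)$. Writing $\H_t=\s((\boldsymbol{J}_u)_{0\le u\le t})$, this sigma-algebra carries the switching history and renders each last-active time $\t^{(i)}_t$ measurable. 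Since $\zeta$ is independent of $(\xib_t)$ and of $X$, conditioning on $\H_t$ freezes the times $\t^{(i)}_t$ to deterministic values $s_i\in[0,t]$ without altering the joint law of $X$ and the bridges. On the event $\{\t^{(i)}_t=s_i,\ i=1,\dots,n\}$ each stopped coordinate equals $\xi^{(i)}(\t^{(i)}_t)=\xi^{(i)}_{s_i}$, so the conditional law of $X$ given $\F_t$ coincides with its law given $\{\xi^{(i)}_{s_i}=y_i\}_i$.

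Next I would apply part (1) of Lemma \ref{lemmaMarkov}: the stopped path of source $i$ up to time $t$ reveals the bridge only on its active set, whose supremum is $s_i=\t^{(i)}_t$, and by the Markov property this information is equivalent to the single terminal value $\xi^{(i)}_{s_i}$. Bayes' theorem then gives
\[ \P[X\in\dd x\,|\,\F_t] = \frac{\prod_i p_{s_i}(y_i\,|\,x)\,\nu(\dd x)}{\int_{\mathbb{X}}\prod_i p_{s_i}(y_i\,|\,x)\,\nu(\dd x)}, \]
where $p_{s}(y\,|\,x)$ denotes the conditional density of $\xi^{(i)}_{s}=sX+\s_i\b_s^{(i)}$ given $X=x$. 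Because $\b_s^{(i)}\sim N(0,s(1-s))$, this density is Gaussian with mean $sx$ and variance $\s_i^2 s(1-s)$; expanding $(y-sx)^2=y^2-2sxy+s^2x^2$, the normalising prefactor and the $y^2$-term are independent of $x$ and cancel in the ratio, leaving exactly $\exp\{(xy-sx^2/2)/(\s_i^2(1-s))\}=h(x,y,s,\s_i)$. Re-substituting $s_i=\t^{(i)}_t$ and $y_i=\xi^{(i)}(\t^{(i)}_t)$ then yields the claimed expression.

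The step requiring the most care is the freezing of the random stopping times $\t^{(i)}_t$: one must justify that conditioning on the independent switching history $\H_t$ legitimately replaces each random time by a constant while preserving the Gaussian conditional densities, which is precisely where the independence of $\zeta$ from $(\xib_t)$ and $X$ is essential. A secondary point is the degenerate case $s_i=0$ (a source never active on $[0,t]$), for which $p_0(\cdot\,|\,x)$ is singular and the Bayes step does not apply directly; here I would argue separately that such a source conveys no information about $X$ and so may be dropped from the conditioning, consistently with the fact that $h(x,0,0,\s_i)=1$ and that the product in the stated formula is unaffected.
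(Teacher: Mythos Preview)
Your proposal is correct and follows essentially the same route as the paper: both reduce the conditioning via Lemma~\ref{lemmaMarkov} to the terminal values $\{\xi^{(i)}(\t^{(i)}_t)\}_i$ at the last-active times, and then apply the Gaussian Bayes computation that collapses to the product of $h$-factors. The paper packages the ``freezing'' of the random times through a tower-property enlargement (conditioning on the full paths up to $\t^{(i)}_t$ inside $\F_t$) rather than your explicit conditioning on the independent switching history $\H_t$, but the mechanism is the same; your separate treatment of the degenerate case $s_i=0$ via $h(x,0,0,\s_i)=1$ is a point the paper leaves implicit.
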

\begin{proof}
	First we note that, for $t_i\in\mathbb{T}_{*}$ and $i=1,\ldots,n$, using the mutual independence of $L^{(i)}_t$s for $i=1,\ldots,n$, we have
	\begin{align} \label{Bayes}
		\P\left[X\in \dd x \left|\, \left\{\xi^{(i)}_{t_i}\right\}_{i=1,\ldots, n} \right.\right] = \frac{\prod^n_{i=1} g\left(x, \xi^{(i)}_{t_i}, t_i \right)  \nu(\dd x)}{\int_{\mathbb{X}} \prod^n_{i=1} g\left(x, \xi^{(i)}_{t_i}, t_i\right) \nu(\dd x)}.
	\end{align}
For the computation of the conditional distribution $\P[X \in \dd x \,|\, \F_t]$, the first step is to use $\{\xi^{(i)}(\t^{(i)}_t)\}_{i=1,\ldots, n}$ to enlarge the information set we are conditioning on, and then to apply the tower property. We here refer to Proposition \ref{lemmaMarkov}.
\begin{equation*}
		\P[X \in \dd x \,|\, \F_t] 
			 = \E \left[\left. \P\left[ X\in \dd x \left\vert\, \left\{(\xi^{(i)}_s)_{0\le s\leq \t^{(i)}_t}\right\}_{i=1,\ldots, n},\, \F_t \right.\right] \,\right| \F_t \right].
\end{equation*}
The $\sigma$-algebra $\F_t$ contains the history $(\boldsymbol{J}_s)_{0\le s\le t}$, which tells up to what time $t_i$ the information coordinates $\{\xi^{(i)}(\t^{(i)}_s)_{0\le s\le t_i}\}_{i=1,\dots,n}$ have been active. Thus once the stopping times $\{\t^{(i)}_t\}_{i=1,\dots,n}$ have occurred, one knows that $(\xi^{(i)}(\t^{(i)}_t))=(\xi^{(i)}_s)_{0\le s\le t_i}$ for $\tau^{(i)}_t=t_i\le t$. Therefore we may apply Proposition \ref{lemmaMarkov} to obtain 
\begin{equation*}			 	
\E \left[\left. \P\left[ X\in \dd x \left\vert\, \left\{(\xi^{(i)}_s)_{0\le s\leq \t^{(i)}_t}\right\}_{i=1,\ldots, n},\, \F_t \right.\right] \,\right| \F_t \right]= \E \left[\left. \P\left[ X\in \dd x \left|\, \left\{\xi^{(i)}(\t^{(i)}_t) \right\}_{i=1,\ldots, n},\, \F_t \right.\right] \,\right| \F_t \right].
\end{equation*}
By use of Equation (\ref{Bayes}) and $\F_t$-measurability, it follows that
\begin{align*}
		\P[X \in \dd x \,|\, \F_t]&= \E \left[\left. \P\left[ X\in \dd x \left|\, \left\{\xi^{(i)}(\t^{(i)}_t) \right\}_{i=1,\ldots, n},\, \F_t \right.\right] \,\right| \F_t \right]
		\\ &= \frac{\prod^n_{i=1} g\left(x, \xi^{(i)}(\t^{(i)}_t), \t^{(i)}_t\right) \nu(\dd x)}
									{\int_{\mathbb{X}} \prod^n_{i=1} g\left(x, \xi^{(i)}(\t^{(i)}_t), \t^{(i)}_t\right) \nu(\dd x)},
	\end{align*}	
for $t_i\in\mathbb{T}_{*}$ and $i=1,\ldots,n$, which gives the statement.
\end{proof}
We can view the appearance of new sources of information from a Hilbert space perspective. We let $\langle \, . \, \rangle$ be the inner product on the Hilbert space of square-integrable functions $\mathcal{L}^{2}(\mathbb{G})$ on a measurable set $\mathbb{G}\subset\mathbb{R}^{2}$, such that $v,w\in\mathcal{L}^{2}(\mathbb{G})$ are orthogonal if $\langle v,w \rangle =0$, and where
\begin{equation}
\label{eq:ell2rep}
\mathcal{L}^{2}(\mathbb{G})=\bigoplus^{n+1}_{i=1}\mathcal{L}^{2}_{i}(\mathbb{G}), \notag
\end{equation}
where $\mathcal{L}^{2}_{i}(\mathbb{G})$ and $\mathcal{L}^{2}_{j}(\mathbb{G})$ are mutually orthogonal closed subspaces of $\mathcal{L}^{2}(\mathbb{G})$ for $i\ne j$, such that any function in $\mathcal{L}^{2}(\mathbb{G})$ can uniquely be represented by the sum of its projections onto the subspaces $\mathcal{L}^{2}_{i}(\mathbb{G})$ for $i=1,\ldots,n+1$ that span $\mathcal{L}^{2}(\mathbb{G})$.

As an example, we choose $\zeta$ on $\mathbb{T}^n$ such that $k_i=1$ for all $i=1,\ldots,n$ and the set of random times is reduced to $\{\varpi^{i}_{1}:i=1,\ldots,n\}$, where we have an ordered collection $0 < \varpi^{\pi(1)}_{1} < \ldots < \varpi^{\pi(i)}_{1} < \ldots < \varpi^{\pi(n)}_{1} < \infty$, given that $\pi$ is the permutation on the coordinates of $(\boldsymbol{\tilde{J}}_t)$ that produces this order. Letting $H_{\alpha}(t)=\boldsymbol{1}(\alpha \leq t)$, and denoting $\varpi^{\pi(0)}_{1}=0$, we define an $n+1$ vector $\{\boldsymbol{I}\}_{0\leq t \leq 1}$ by
\begin{align}
\boldsymbol{I}_t = \left[1-H_{\varpi^{\pi(1)}_{1}}(t), \ldots, 
H_{\varpi^{\pi(i-1)}_{1}}(t)(1-H_{\varpi^{\pi(i)}_{1}}(t)), \ldots,
H_{\varpi^{\pi(n)}_{1}}(t)\right]^{\tp}. \notag
\end{align}
We assume that $X$ has density, where we write $p(x \,|\, \F_t )\dd x = \P[X \in \dd x \,|\, \F_t]$ for $0\leq t < 1$. Note that if $\hat{p}_{t}(x)=\sqrt{p(x \,|\, \F_t )}$ for $0\leq t < 1$, then $\hat{p}\in\mathcal{L}^{2}(\mathbb{X}\times \mathbb{T}_{*})$. Also, we let $p_t^{(0)}=\sqrt{p(x)}$ and the sequence
\begin{equation}
\label{eq:squarerootdensity}
p_{t}^{(i)}(x)=\sqrt{p\left( x \,|\, \xi^{(\pi(1))}_{t},\, ... \, , \,\xi^{(\pi(i))}_{t}\right)}, \notag
\end{equation}
for $i=1,...,n$. Hence, $p_{t}^{(1)}(x)=p\left( x \,|\, \xi^{(\pi(1))}_{t}\right)^{\frac{1}{2}}$, $p_{t}^{(2)}(x)=p\left( x \,|\, \xi^{(\pi(1))}_{t},\xi^{(\pi(2))}_{t}\right)^{\frac{1}{2}}$ etc., and $p^{(i)}\in\mathcal{L}^{2}(\mathbb{X}\times \mathbb{T}_{*})$ for $i=0,\ldots,n$. We let the disjoint sets $\mathbb{T}_{i}$, for $i=0,...,n$ be such that $\mathbb{T}_{0}=\{t \in \mathbb{T}_* : t<\varpi^{\pi(1)}_{1}\}$, $\mathbb{T}_{i}=\{t \in \mathbb{T}_* : \varpi^{\pi(i)}_{1}\leq t<\varpi^{\pi(i+1)}_{1}\}$ for $i=1,...,n-1$, and $\mathbb{T}_{n}=\{t \in \mathbb{T}_* : \varpi^{\pi(n)}_{1}\leq t\}$. 
Finally, for the next statement, we define the measurable function $\widehat{\phi}^{(i)}\in\mathcal{L}^{2}(\mathbb{X}\times \mathbb{T}_{*})$ for $i=0,...,n$ by
\begin{equation}
\label{eq:includingfourier}
\widehat{\phi}^{(i)}_{t}(x) = 
\begin{cases}
p_{t}^{(i)}(x) & \textrm{if $t \in \mathbb{T}_{i}$},\\
0 & \textrm{otherwise.} \notag \\
\end{cases}
\end{equation}
\begin{prop}
Let $\hat{p}_{t}(x)=p(x \,|\, \F_t )^{\frac{1}{2}}$ as defined above. Then,
\begin{align}
\hat{p}=\widehat{\phi}^{(0)}+ \cdots + \widehat{\phi}^{(n)}.  \notag
\end{align}
Hence, $p(x \,|\, \F_t ) = (p_t^{(0)}(x))^2\boldsymbol{I}_t^{(1)}+ \cdots + (p_t^{(n)}(x))^2\boldsymbol{I}_t^{(n+1)}$. 
\end{prop}
\begin{proof}
Using Proposition \ref{lemmaMarkov} and Proposition \ref{conditionaldist}, we can rewrite $\widehat{\phi}^{(i)}\in\mathcal{L}^{2}(\mathbb{X}\times \mathbb{T}_{*})$ as
\begin{equation}
\widehat{\phi}^{(i)}_{t}(x) = 
\begin{cases}
\sqrt{p(x \,|\, \F_t )} & \textrm{if $t \in \mathbb{T}_{i}$},\\
0 & \textrm{otherwise.} \notag \\
\end{cases}
\end{equation}
for $i=0,...,n$. Thus, having $\mathbb{T}_*= \bigcup^{n}_{i=0} \mathbb{T}_{i}$, and $\mathbb{G}=\mathbb{X} \times \mathbb{T}_{*}$ as the domain of the measurable functions $\hat{p}$ and $p^{(i)}$ for $i=0,\ldots,n$, we can consider the orthogonal decomposition
\begin{align}
\mathcal{L}^{2}(\mathbb{X}\times \mathbb{T}_{*})=\bigoplus^{n+1}_{i=1}\mathcal{L}^{2}_{i}(\mathbb{X}\times\mathbb{T}_{*}), \notag
\end{align}
where $\widehat{\phi}^{(i)}\in\mathcal{L}^{2}_{i+1}(\mathbb{X}\times\mathbb{T}_*)$ for $i=0,\ldots,n$. Hence, $\langle \widehat{\phi}^{(i)},\widehat{\phi}^{(j)} \rangle =0$ for $i\ne j$ on $\mathbb{X}\times\mathbb{T}_*$, and we have the first part of the statement. The second part follows directly since $\mathbf{I}_{t}^{(i)}\mathbf{I}_{t}^{(j)}=0$ for $i\ne j$ and $\mathbf{I}_{t}^{(i)}\mathbf{I}_{t}^{(i)}=\mathbf{I}_{t}(i)$ for $i=1,\ldots,n+1$.
\end{proof}
Since each $\phi^{(i)}_{t}(x)$ takes values in $\mathbb{R}$ for $i=0,\ldots,n$ and $x\in\mathbb{X}$, we can as well work with any Hilbert space $\mathcal{H}$ isomorphic to $\mathbb{R}^{n+1}$, and canonically represent $\hat{p}$ as an ($n+1$)-tuple in $\mathbb{R}^{n+1}$, and write $\mathbf{I}(i)=e_{i}=\left[0, \ldots,  1, \ldots, 0\right]^{\tp}$ for $i=1,\ldots,n+1$. 
Thus, for $\mathcal{H}\cong\mathbb{R}^{n+1}$, since $e_{i}\in\mathcal{H}$ forms a complete orthonormal sequence for $i=1,\ldots,n+1$, we have
\begin{align}
\hat{p}&=\sum^{n+1}_{i=1}\langle \hat{p},e_{i} \rangle e_{i}=\sum^{n+1}_{i=1}p^{(i-1)}e_{i}, \notag
\end{align}
and hence, $p^{(i)}$'s for $i=0,\ldots,n$ are the Fourier coefficients of $\hat{p}$. The insight gained from the Hilbert space brings forth a geometrical interpretation. The function $\hat{p}$ is a non-negative function, and for a fixed time $t$, the integral of the square of $\hat{p}_{t}$ on $\mathbb{X}$ is unity. Thus, $\hat{p}_{t}$ determines a point on the positive orthant of the unit sphere $\mathcal{S}^{+}\subset\mathcal{L}^{2}$, where the geodesics between $p^{(i)}$s and $p^{(j)}$s for $i\ne j$ are the circles on the Riemannian manifold ($\mathcal{S}^{+},\langle \, . \, \rangle)$. Note that here, $p^{(i)}$ and $p^{(j)}$ are defined in terms of $i$ and $j$ number of active information sources, respectively.
\begin{rem} \label{thm:remgeometry1}
The non-marginal impact of a new information source can be measured by the spherical distance between the Fourier coefficients $p^{(i)}$ and $p^{(i+1)}$ for $i=0,\ldots,n-1$ on $\mathcal{S}^{+}$.
\end{rem}

\subsection{Endogenous jump-diffusion}\label{EJD}
We focus on a canonical case where $(L^{(i)}_t)$ is a Brownian motion such that $L^{(i)}_t  \sim \mathcal{N}(0,\s_i^2 t)$ for some $\s_i>0$ for $i=1,\ldots,n$. The reason is two-fold: (i) the Brownian case provides valuable analytical tractability in deriving dynamic representations, (ii) we explicitly show how conditional expectation martingales may exhibit jumps even if $(\boldsymbol{\xi}_t)$ is a continuous process.

\begin{coro}
Let $(\boldsymbol{L}_t)_{0\le t}$ be a multivariate Brownian motion such that $L^{(i)}_t  \sim \mathcal{N}(0,\s_i^2 t)$ for $\s_i>0$. Then,
	\begin{align*}
		\P[X \in \dd x \,|\, \F_t] 
			 &= \frac{\prod_i h\left(x, \xi^{(i)}(\t^{(i)}_t), \t^{(i)}_t, \s_i\right) \nu(\dd x)}
									{\int_{\mathbb{X}} \prod_i h\left(x, \xi^{(i)}(\t^{(i)}_t), \t^{(i)}_t, \s_i\right) \nu(\dd x)},
	\end{align*}
	for $t \in \mathbb{T}_{*}$, where the transformation $h:\R\times\R\times\mathbb{T}_{*}\times\R_+ \rightarrow \R_+$ is the Gaussian map given by
$ h(x, y, t, \s) = \exp\left\{(x y-t x^2/2) / (\s^2(1-t))\right\}.$ 
\end{coro}
\begin{proof}
Since $L^{(i)}_t  \sim \mathcal{N}(0,\s_i^2 t)$ is a Brownian motion for some $\s_i>0$, we can write $ \xi^{(i)}_t \law t X + \s_i \b_t^{(i)}$ for $i=1,\ldots,n$, where $(\beta_t^{(i)})_{0\le t\le 1}$ is a standard Brownian bridge; a Gaussian process with mean function identically zero, and covariance function $(s,t)\mapsto \min[s,t] - st$, for $s,t\in\mathbb{T}$. This anticipative representation holds since (i) $\xi^{(i)}_{1}$ has marginal law $\nu$, and (ii) for all $m\in\mathbb{N}_{+}$, every $0<t_{1}<\ldots<t_{m}<1$, every $(y_{1},\ldots,y_{m})\in\mathbb{R}^{m}$, and $\nu$-a.e. $x$, $\mathbb{P}(\xi^{(i)}_{t_{1}}\in \dd y_{1}^{(i)},\ldots, \xi^{(i)}_{t_{m}}\in \dd y_{m}^{(i)} | \xi^{(i)}_{1}=x)=\mathbb{P}(L^{(i)}_{t_{1}}\in \dd y_{1}^{(i)},\ldots, L^{(i)}_{t_{m}}\in \dd y_{m}^{(i)} | L^{(i)}_{1}=x)$. Given that $X$ and the Brownian bridges $\{(\b_t^{(i)})\}_{i=1,\ldots,n}$, are mutually independent,	
	\begin{align} \label{Bayesbrownian}
		\P\left[X\in \dd x \left|\, \left\{\xi^{(i)}_{t_i}\right\}_{i=1,\ldots, n} \right.\right]
			&= \frac{\prod^n_{i=1} \exp\left[-\frac{1}{2} \left( \frac{\xi^{(i)}_{t_i} - t_i x}{\s_i\sqrt{t_i(1-t_i)}} \right)^2  \right] \nu(\dd x)}
							{\int_{\mathbb{X}} \prod^n_{i=1} \exp\left[-\frac{1}{2} \left( \frac{\xi^{(i)}_{t_i} - t_i x}{\s_i\sqrt{t_i(1-t_i)}} \right)^2  \right] \nu(\dd x)}\nonumber
			\\ &= \frac{\prod^n_{i=1} \exp\left( \frac{x \xi^{(i)}_{t_i}-t_i x^2/2}{\s_i^2 (1-t_i)}\right) \nu(\dd x)}
							{\int_{\mathbb{X}} \prod^n_{i=1} \exp\left(\frac{ x \xi^{(i)}_{t_i}-t_i x^2/2}{\s_i^2 (1-t_i)}\right) \nu(\dd x)}. \nonumber
	\end{align}
The rest of the proof follows that of Proposition \ref{conditionaldist}.
\end{proof}

Accordingly, unless stated otherwise, we shall set $ \xi^{(i)}_t \law t X + \s_i \b_t^{(i)}$ for $i=1,\ldots,n$. For $t\in(0,1)$, $\xi_t^{(i)}/t$ is equal to $X$ plus some independent Gaussian noise with variance $\s_i^2(1-t)/t$, and $\xi_1^{(i)}= X$, $i=1,\ldots,n$. This functional form is also natural from the standpoint of stochastic filtering. 
If there were linear dependence (with non-singular covariation) between $\{(\b_t^{(i)})\}_{i=1,\ldots,n}$, then there would exist a linear transformation of $(\boldsymbol{\xi}_t)$ that would fit the framework. 
Hence, allowing linear dependence between $\{(\b_t^{(i)})\}_{i=1,\ldots,n}$ does not enrich the model. 

Working with $(\boldsymbol{\tilde{J}}_t \otimes \,\boldsymbol{\xi}_t)$, we maintain analytic tractability by constructing what we term \emph{effective} and \emph{complementary} information processes. For the randomly evolving active coordinates of $(\boldsymbol{\tilde{J}}_t \otimes \,\boldsymbol{\xi}_t)$, this parametrisation allows to reduce the multi-dimensional information system to a dynamically constructed one-dimensional (effective) information process containing all active information at any given time. This idea reduces the complexity of the information flow model substantially without compromising its effectiveness or reducing its richness.

\begin{defn}
Let the set-valued stochastic process $(\mathcal{J}_t)$ be given by $\mathcal{J}_t = \{i : J_t^{(i,i)} = 1\}$.
For $\mathcal{J}_t$ non-empty, the effective information process $(\hat{\xi_t})_{t\in\mathbb{T}}$ on $\R$ is defined by
\[ \hat{\xi_t}=\hat{\s_{t}}^2 \sum_{i \in \mathcal{J}_t} \s^{-2}_i\xi^{(i)}_t\]
with effective volatility parameter
$ \hat{\s_{t}} = \left(\sum_{i \in \mathcal{J}_t} \s^{-2}_i\right)^{-1/2}$.
For $\mathcal{J}_t = \emptyset$, set $\hat{\xi}_t = 0$ and $\hat{\s_{t}}=0$.
\end{defn}
Adding elements to $\mathcal{J}_t$ decreases the size of $\hat{\s_{t}}$, hence, adding sources of information decreases the effective noise in the system.
The dynamics of the effective information process are dependent on the current state $\boldsymbol{\tilde{J}}_t\in\S$.
We can make this dependence more explicit by rewriting the effective information as follows:
\[ \hat{\xi}_t = 
		\left\{ 
				\begin{aligned} &0 && \text{if $\boldsymbol{J}_t=\mathbf{0}$}, 
					\\ &\frac{\boldsymbol{\xi}_t^{\tp} \boldsymbol{J}_t \, \boldsymbol{\rho}}{ \mathbf{1}^{\tp} \boldsymbol{J}_t \, \boldsymbol{\rho}} && \text{otherwise,} 
				\end{aligned} 
		\right. 
\]
where $\boldsymbol{\rho} =(\s_1^{-2},\ldots,\s_n^{-2})^{\tp}$ and $\mathbf{1}=(1,\ldots,1)^{\tp}$.  
\begin{lem} \label{lemmaeffectiveinfo}
	If $\mathcal{J}_t\neq \emptyset$, then the effective information process is given by
	\[ \hat{\xi}_t \law tX + \hat{\s_{t}} \hat{\b}_t, \]
	where $(\hat{\b}_t)_{t\in\mathbb{T}}$, defined by
	$ \hat{\b}_t = \hat{\s_{t}} \sum_{i \in \mathcal{J}_t} \s^{-1}_i\b^{(i)}_t$,
	is a standard Brownian bridge between the jumps of $(\mathcal{J}_t)$.
\end{lem}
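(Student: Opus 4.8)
The plan is to separate the statement into a purely algebraic identity and a probabilistic claim about the law of the noise term. First I would establish the representation $\hat{\xi}_t = tX + \hat{\s_t}\hat{\b}_t$ by substituting the marginal definition $\xi^{(i)}_t = tX + \s_i\b^{(i)}_t$ directly into the definition of the effective process. On the event $\mathcal{J}_t\neq\emptyset$ this gives
\[
\hat{\xi}_t = \hat{\s_t}^2\sum_{i\in\mathcal{J}_t}\s_i^{-2}\bigl(tX+\s_i\b^{(i)}_t\bigr)
= \hat{\s_t}^2\Bigl(\sum_{i\in\mathcal{J}_t}\s_i^{-2}\Bigr)tX + \hat{\s_t}^2\sum_{i\in\mathcal{J}_t}\s_i^{-1}\b^{(i)}_t,
\]
where the first summand collapses to $tX$ because $\hat{\s_t}^2=(\sum_{i\in\mathcal{J}_t}\s_i^{-2})^{-1}$, and the second is exactly $\hat{\s_t}\hat{\b}_t$ with $\hat{\b}_t$ as defined. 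This part is immediate.

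The substance of the lemma is the claim that $(\hat{\b}_t)$ is a standard Brownian bridge between jumps, and here I would make the phrase \emph{between jumps} precise by invoking the assumed independence of the point field $\zeta$ and the family $\{(\b^{(i)}_t)\}$. Conditioning on a realisation of $(\boldsymbol{J}_t)$ turns the active set $\mathcal{J}_t$ into a deterministic, right-continuous step function of $t$; on any interval on which it takes a constant value $\mathcal{J}$, the effective bridge coincides with the fixed linear combination $\b^{\mathcal{J}}_t := \hat{\s}^{\mathcal{J}}\sum_{i\in\mathcal{J}}\s_i^{-1}\b^{(i)}_t$, where $\hat{\s}^{\mathcal{J}}=(\sum_{i\in\mathcal{J}}\s_i^{-2})^{-1/2}$. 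It therefore suffices to show that each such $\b^{\mathcal{J}}$ is itself a standard Brownian bridge on $[0,1]$.

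For this I would verify the three defining properties. The process $\b^{\mathcal{J}}$ is Gaussian, being a finite linear combination of jointly Gaussian (indeed mutually independent) bridges; it has identically zero mean since each $\b^{(i)}$ does; and, using independence so that $\cov(\b^{(i)}_s,\b^{(j)}_t)=\delta_{ij}(\min[s,t]-st)$, its covariance is
\[
\cov\bigl(\b^{\mathcal{J}}_s,\b^{\mathcal{J}}_t\bigr)
= (\hat{\s}^{\mathcal{J}})^2\sum_{i\in\mathcal{J}}\s_i^{-2}\bigl(\min[s,t]-st\bigr)
= \min[s,t]-st,
\]
the weights being normalised precisely so that $(\hat{\s}^{\mathcal{J}})^2\sum_{i\in\mathcal{J}}\s_i^{-2}=1$. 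This is the Brownian bridge covariance, so $\b^{\mathcal{J}}$ is a standard Brownian bridge, and consequently $(\hat{\b}_t)$ agrees with one on every interval between consecutive switching times.

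The computations are routine; the only genuine subtlety, and the step I expect to require the most care in the write-up, is the \emph{between jumps} reduction, namely justifying that one may treat $\mathcal{J}_t$ as deterministic when assessing the conditional law of the bridge combination. This is exactly what the independence of $\zeta$ from the noise $\{(\b^{(i)}_t)\}$ provides. The restriction is essential, since across a switching time the active set, and hence the identity of the underlying Brownian bridge, changes, so the global process $(\hat{\b}_t)$ is not itself a bridge.
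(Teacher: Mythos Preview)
Your proposal is correct and follows essentially the same route as the paper: expand the definition to obtain the decomposition $\hat{\xi}_t = tX + \hat{\s_t}\hat{\b}_t$, then argue that a fixed linear combination of independent standard Brownian bridges is again a standard Brownian bridge. The paper's version is terser—it invokes the fact that a linear combination of independent Brownian bridges is a Brownian bridge and then checks only the marginal mean and variance—whereas you verify the full two-time covariance and make the conditioning on $\zeta$ explicit, which is a more careful treatment of the same idea.
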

\begin{proof}
	By definition, for $\mathcal{J}_t\neq\emptyset$, the effective information process is $\hat{\xi}_t = tX + \hat{\s_{t}} \sum_{i \in \mathcal{J}_t} \s^{-1}_i\b^{(i)}_t$.
	A linear combination of independent Brownian bridges is a Brownian bridge.
	Further, we have
\begin{align*}	
\E\left[\hat{\s_{t}} \sum_{i \in \mathcal{J}_t} \s^{-1}_i\b^{(i)}_t\right] =0, \hspace{0.1in} \var\left[\hat{\s_{t}} \sum_{i \in \mathcal{J}_t} \s^{-1}_i\b^{(i)}_t\right] =t(1-t),
\end{align*}
	hence, $(\hat{\b}_t)$ is a \emph{standard} Brownian bridge between the jumps of $(\mathcal{J}_t)$. 
\end{proof}
The effective information process jumps every time $(\boldsymbol{\tilde{J}}_t)$ changes state. The jump is caused by the change in the number of Brownian bridges defining the effective information process as well as the number of volatility control parameters defining the effective volatility process $(\hat{\s_{t}})$.

\begin{defn}
Let the set-valued complementary process $(\mathcal{J}_t^\complement)$ be given by $\mathcal{J}_t^\complement = \{i : J_t^{(i,i)} = 0\}$. The complementary information process $(\eta_t)_{t\in\mathbb{T}}$ is a function-valued process defined by
\begin{equation*}
\eta_t: x\mapsto \prod_{i\in\mathcal{J}_t^\complement} h\left(x, \xi^{(i)}(\t^{(i)}_t), \t^{(i)}_t, \s_i\right),
\end{equation*}
where $\eta_t:=1$ if $\mathcal{J}_t^\complement = \emptyset$.
\end{defn}
The complementary information process is piecewise constant between state changes of $(\boldsymbol{\tilde{J}}_t)$.
The next statement on the joint Markov property of effective and complementary information processes will be very useful. For the rest of this work, we define the measure-valued process $(\nu_t)_{t\in \mathbb{T}_{*}}$ by $\nu_t(A)=\P[X \in A \,|\, \F_t]$ for $A\in\mathcal{B}(\mathbb{X})$.
\begin{prop} \label{effectivecomplementary}
	The measure $\nu_t(A)$ satisfies 
	\begin{align*}
	\nu_t(A) = \P[X \in A \,|\, \hat{\xi}_t, \eta_t],
	\end{align*}
for $t \in \mathbb{T}_{*}$ and for any $A\in\mathcal{B}(\mathbb{X})$.
\end{prop}
\begin{proof}
	Using Proposition \ref{conditionaldist} and noting that $\t^{(i)}_t = t$ if $i\in\mathcal{J}_t$, we have
	\begin{align*}
		 \P[X \in \dd x \,|\, \F_t]=& \frac{\prod_{i\in\mathcal{J}_t} h\left(x, \xi^{(i)}(\t^{(i)}_t), \t^{(i)}_t, \s_i\right)
									\prod_{i\in\mathcal{J}_t^\complement} h\left(x, \xi^{(i)}(\t^{(i)}_t), \t^{(i)}_t, \s_i\right) \nu(\dd x)}
									{\int_{\mathbb{X}} \prod_{i\in\mathcal{J}_t} h\left(x, \xi^{(i)}(\t^{(i)}_t), \t^{(i)}_t, \s_i\right)
											\prod_{i\in\mathcal{J}_t^\complement} h\left(x, \xi^{(i)}(\t^{(i)}_t), \t^{(i)}_t, \s_i\right) \nu(\dd x)}		
								\\ =& \frac{\prod_{i\in\mathcal{J}_t} \exp\left\{\frac{x \xi^{(i)}(\t^{(i)}_t)-\t^{(i)}_t x^2/2}{\s_i^2(1-\t^{(i)}_t)}\right\}	
									\eta_t(x) \, \nu(\dd x)}
									{\int_{\mathbb{X}} \prod_{i\in\mathcal{J}_t} \exp\left\{\frac{x \xi^{(i)}(\t^{(i)}_t)-\t^{(i)}_t x^2/2}{\s_i^2(1-\t^{(i)}_t)}\right\}	
											\eta_t(x) \, \nu(\dd x)}
						\\ =& \frac{\exp\left\{\sum_{i\in\mathcal{J}_t}\frac{x \xi^{(i)}_t-t x^2/2}{\s_i^2(1-t)}\right\}	
									\eta_t(x) \, \nu(\dd x)}
									{\int_{\mathbb{X}} \exp\left\{\sum_{i\in\mathcal{J}_t}\frac{x \xi^{(i)}_t-t x^2/2}{\s_i^2(1-t)}\right\}	
											\eta_t(x) \, \nu(\dd x)}
					   \\=& \frac{h\left(x, \hat{\xi}_t, t, \hat{\s_{t}} \right) \eta_t(x) \, \nu(\dd x)}
									{\int_{\mathbb{X}} h\left(x, \hat{\xi}_t, t, \hat{\s_{t}} \right) \eta_t(x) \,\nu(\dd x)},
	\end{align*}
	where $h\left(x, 0, t, 0\right):=1$, and where we adopt the convention that a product with an empty index set is equal to one.
\end{proof}

We shall now derive a dynamical equation for the conditional expectation process, which we later use to model asset price dynamics, as an example. The dynamics of the conditional expectation process turns out to be jump-diffusion. The jumps arise from the activation of new sources of information generating the filtration. For the evolution of $(\nu_t)$, we introduce two $(\F_t)$-adapted counting processes $(C_t)_{t\in\mathbb{T}}$ and $(N_t)_{t\in\mathbb{T}}$, given by
\begin{align*}
&C_t = \sum_{s\leq t} \1\{\boldsymbol{\tilde{J}}_{s}\ne\boldsymbol{\tilde{J}}_{s-}\},
&N_t = \sum_{s\leq t} \delta_s, \hspace{0.1in} \text{where} \hspace{0.1in} \delta_s = \1\{\mathcal{J}_s \backslash \mathcal{J}_{s-} \ne \emptyset \}.
\end{align*}
Hence, $C_t$ is the number of times $(\boldsymbol{\tilde{J}}_t)$ has changed state up to and including time $t$, and $N_t$ is the number of state changes in which at least one inactive information process becomes active. In view of Proposition \ref{measureSDE}, we define the following.
\begin{defn} Let $\pi_j = \inf\{t:C_t=j\}$,
with $\pi_0=0$. The process $(M_t)_{t\in\mathbb{T}_*}$ is defined by
\[ M_t= \sum_{j=1}^{C_t+1} M_t^{(j)} + \int_0^t (\hat{\xi}_s-\hat{\xi}_{s-}) \d N_s, \]
where $(M_t^{(j)})_{t\in\mathbb{T}_*}$ is given by
\[
	M_t^{(j)} = 
	\left\{ \begin{aligned}
		&0, && \text{for $t < \pi_{j-1}$,}
		\\ &(\hat{\xi}_{t\wedge \pi_{j}-}-\hat{\xi}_{\pi_{j-1}}) - \int_{\pi_{j-1}}^{t\wedge \pi_{j}-}\frac{\E[ X \,|\, \hat{\xi}_s, \eta_s ] - \hat{\xi}_s}{ 1-s} \d s, &&\text{for $\mathcal{J}_{\pi_{j-1}}\neq\emptyset$,}
		\\ &0, && \text{for $\mathcal{J}_{\pi_{j-1}}=\emptyset$.}
	\end{aligned} \right.
\]
\end{defn}
The following statement is important in order to assign a sufficient structure for the dynamics of the continuous part of $(M_t)$, so that the stochastic integrals with respect to the processes $\{(M_t^{(i)})\}_{i=1,\ldots,n}$ are well-defined for the subsequent proposition.
\begin{prop}\label{piecewisemartingale}
	Let $t\in[\pi_{j-1},\pi_{j}\wedge 1)$, where $\pi_{j-1}$ and $\pi_{j}$ are two consecutive jump times of $(C_t)$ where $\mathcal{J}_{\pi_{j-1}}\neq\emptyset$. Then, $(M_t)_{\pi_{j-1} \leq t < \pi_{j}}$ is an $(\F_t)$-martingale.
\end{prop}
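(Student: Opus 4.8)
The plan is to localise to one inter-jump interval and reduce the claim to the martingale property of a single drifting Brownian bridge information process, to which a filtering (innovations) argument applies. Fix the two consecutive jump times $\pi_{j-1}<\pi_j$ of $(C_t)$ and work on $[\pi_{j-1},\pi_j)$. On this interval the active set $\mathcal{J}_t$ is a fixed set $\mathcal{J}$, so by Lemma \ref{lemmaeffectiveinfo} (for $\mathcal{J}\neq\emptyset$) the effective volatility is a constant $\hat{\s}$ and $\hat{\xi}_t=tX+\hat{\s}\,\hat{\b}_t$ with $(\hat{\b}_t)$ a standard Brownian bridge, while $\eta_t$ is frozen at a fixed function $\eta$. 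Here $C_t+1=j$, so $M_t=\sum_{k=1}^{j}M^{(k)}_t+\int_0^t(\hat{\xi}_s-\hat{\xi}_{s-})\,\dd N_s$; for $k<j$ one has $t\wedge\pi_k-=\pi_k-$, so each $M^{(k)}_t$ has reached its terminal value and is constant, and the integral against $\dd N_s$ is constant because $(N_s)$ makes no jump inside the open interval. Hence $M_t$ differs from $M^{(j)}_t$ by an $\F_{\pi_{j-1}}$-measurable constant, and since $t\wedge\pi_j-=t$ there, it suffices to show that $M^{(j)}_t=(\hat{\xi}_t-\hat{\xi}_{\pi_{j-1}})-\int_{\pi_{j-1}}^{t}\frac{\E[X\,|\,\hat{\xi}_s,\eta_s]-\hat{\xi}_s}{1-s}\,\dd s$ is an $(\F_t)$-martingale on $[\pi_{j-1},\pi_j)$.

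Next I would obtain the dynamics of $(\hat{\xi}_t)$ in a filtration $(\mathcal{H}_t)$ that additionally knows $X$. Because the point field $\zeta$ is independent of $(\boldsymbol{\xi}_t)$, I would condition on the realised configuration of $\zeta$ up to $\pi_{j-1}$, which fixes $\mathcal{J}$, the starting value $\hat{\xi}_{\pi_{j-1}}$ and $\eta$, and leaves the active coordinates as genuine independent standard Brownian bridges; then $(\hat{\b}_t)$ is driven by the standard Brownian motion $B=\hat{\s}\sum_{i\in\mathcal{J}}\s_i^{-1}B^{(i)}$, which is an $(\mathcal{H}_t)$-Brownian motion independent of $X$. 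Using the bridge equation $\dd\hat{\b}_t=-\hat{\b}_t(1-t)^{-1}\dd t+\dd B_t$ together with $\hat{\s}\,\hat{\b}_t=\hat{\xi}_t-tX$, the $(\mathcal{H}_t)$-dynamics are $\dd\hat{\xi}_t=\frac{X-\hat{\xi}_t}{1-t}\,\dd t+\hat{\s}\,\dd B_t$.

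I would then project onto the observer filtration $(\F_t)$. Since $\hat{\xi}_t$ is $\F_t$-measurable and $\F_s\subseteq\mathcal{H}_s$, the $(\F_t)$-optional projection of the drift is $\frac{\E[X\,|\,\F_t]-\hat{\xi}_t}{1-t}$, and by the joint Markov property of Proposition \ref{effectivecomplementary} we may replace $\E[X\,|\,\F_t]$ by $\E[X\,|\,\hat{\xi}_t,\eta_t]$, which is exactly the integrand in $M^{(j)}$. The innovations theorem then gives that $\hat{\s}^{-1}M^{(j)}_t$ is an $(\F_t)$-Brownian motion (started at the origin at $\pi_{j-1}$); in particular $M^{(j)}$, and hence $M$, is an $(\F_t)$-martingale on $[\pi_{j-1},\pi_j)$. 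Equivalently, one may verify this directly, using $\E[B_t-B_s\,|\,\F_s]=0$ (valid since $B$ is an $(\mathcal{H}_t)$-Brownian motion and $\F_s\subseteq\mathcal{H}_s$) and identifying the compensator through the conditional Gaussian density $h$ of Proposition \ref{conditionaldist}; because $\zeta$ is independent of $(\boldsymbol{\xi}_t)$, the switch time $\pi_j$ is independent of the bridge dynamics, so the statement on the stochastic interval follows after integrating out the configuration.

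The main obstacle I anticipate is the rigorous innovations projection inside the enlarged filtration $(\F_t)$ rather than the filtration generated by $(\hat{\xi}_t)$ alone: one must justify replacing the signal $X$ in the drift by its $\F_t$-conditional mean and confirm that the resulting innovation remains an $(\F_t)$-martingale, with the extra care that the effective driving motion $B$ is itself defined through the random active set $\mathcal{J}$ --- this is precisely what forces the conditioning on the point-field configuration, after which independence of $\zeta$ from $(\boldsymbol{\xi}_t)$ and the Markov identification of the filtered drift reduce everything to the classical single drifting-Brownian-bridge filtering result. The degenerate case $\mathcal{J}=\emptyset$, in which the effective process and its volatility vanish and Lemma \ref{lemmaeffectiveinfo} does not apply, would be treated separately by convention.
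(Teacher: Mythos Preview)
Your approach is correct in outline and genuinely different from the paper's. The paper proves the martingale property by a direct computation: it expands $\E[M_u-M_t\,|\,\F_t]$ using $\hat{\xi}_s=sX+\hat{\s}\hat{\b}_s$, observes that all terms containing $X$ cancel algebraically (the $uX-tX$ pieces against the integrals $\int_t^u s(1-s)^{-1}\dd s$ and $\int_t^u(1-s)^{-1}\dd s$ times $\E[X\,|\,\hat{\xi}_t,\eta_t]$), and then disposes of the remaining bridge terms via the elementary identity $\E[\hat{\b}_u\,|\,\hat{\b}_t]=\frac{1-u}{1-t}\hat{\b}_t$, applied after a tower-property step that inserts $X$ and $\hat{\b}_t$ into the conditioning. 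No SDEs, no innovations theorem---just the Gaussian conditional mean of a Brownian bridge.

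Your route writes the bridge SDE $\dd\hat{\b}_t=-\hat{\b}_t(1-t)^{-1}\dd t+\dd B_t$ in an $X$-enlarged filtration, reads off $\dd\hat{\xi}_t=\frac{X-\hat{\xi}_t}{1-t}\dd t+\hat{\s}\,\dd B_t$, and then invokes the innovations projection to replace $X$ by its $(\F_t)$-conditional mean. This buys you more: you obtain immediately that $\hat{\s}^{-1}M^{(j)}$ is an $(\F_t)$-Brownian motion on the interval, which is precisely Part~1 of Proposition~\ref{ExpectationSDE}, whereas the paper establishes the Brownian-motion property separately there via L\'evy's characterisation after first proving the martingale property here. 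The cost is the extra machinery of the innovations theorem and the care (which you rightly flag) needed to justify the projection in the enlarged filtration containing the switching history; the paper's bare-hands computation avoids that issue entirely because it never leaves the observation filtration. Your localisation and reduction to $M^{(j)}$ is clean and matches the structure the paper exploits implicitly.
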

\begin{proof}
The integrability condition $\E[\,|M_{t}|\,]<\infty$ for $t\in\mathbb{T}_*$ is satisfied. Next we show $\E[M_u\,\vert\,\F_t]=M_t$ for $u\geq t$, where we consider the random interval $(t,u)\in[\pi_{j-1},\pi_j\wedge 1)$, with $\mathcal{J}_{\pi_{j-1}}\neq\emptyset$, over which $(M_t)$ has no discontinuity. If $\mathcal{J}_{\pi_{j-1}}=\emptyset$, then $(M_t)_{\pi_{j-1} \leq t < \pi_{j}}$ would simply be 0.
	First, note that we have
	\begin{align}
		\E[M_{u} \,|\, \F_t]&= \E[M_{u} - M_{t} \,|\, \F_t] + M_{t} \nonumber \\
		&=M_{t}+\E\left[\left. \hat{\xi}_u-\hat{\xi}_t \,\right| \hat{\xi}_t, \eta_t \right] +\E\left[\left.\int^{u}_{t}\frac{\hat{\xi}_s}{1-s} \dd s \,\right| \hat{\xi}_t, \eta_t \right] -\E\left[\left. \int^{u}_{t}\frac{\E[X \,|\, \hat{\xi}_s, \eta_s]}{1-s} \dd s \,\right| \hat{\xi}_t, \eta_t \right]. \nonumber
	\end{align}
Writing the terms explicitly and using the tower property, we have
	\begin{align}
		\E[M_{u} \,|\, \F_t] = M_{t} &+ \E\left[\left. Xu+\hat{\s}_{u}\hat{\b}_u \,\right| \hat{\xi}_t, \eta_t  \right]-\E\left[\left.Xt+\hat{\s}_{t}\hat{\b}_t \,\right| \hat{\xi}_t, \eta_t  \right] \nonumber \\
			&+\E\left[X \left|\, \hat{\xi}_t, \eta_t \right.\right]\int^{u}_{t}\frac{s}{1-s}\dd s +\E\left[\left. \int^{u}_{t}\frac{\hat{\s}_{s}\hat{\b}_s}{1-s}\dd s \,\right| \hat{\xi}_t, \eta_t \right] \nonumber \\
			&-\E\left[X \left|\, \hat{\xi}_t, \eta_t \right.\right]\int^{u}_{t}\frac{1}{1-s}\dd s. \nonumber
	\end{align}
	Note that all the terms involving $X$ disappear, and we are left with
	\begin{align}
		\E[M_{u} \,|\, \F_t] &= M_{t}+\E\left[\left. \hat{\s}_{u}\hat{\b}_u \,\right| \hat{\xi}_t, \eta_t \right]
				-\E\left[\left. \hat{\s}_{t}\hat{\b}_t \,\right| \hat{\xi}_t, \eta_t \right] 
				+\int^{u}_{t}\frac{\E[\hat{\s}_{s}\hat{\b}_s \,|\, \hat{\xi}_t, \eta_t]}{1-s}\dd s \nonumber \\
		&= M_{t} + \hat{\s_{t}}\left(\E\left[\left. \hat{\b}_u \,\right| \hat{\xi}_t, \eta_t \right]
						-\E\left[\left. \hat{\b}_t \,\right| \hat{\xi}_t, \eta_t \right] +\int^{u}_{t}\frac{\E[\hat{\b}_s \,|\, \hat{\xi}_t, \eta_t]}{1-s}\dd s\right), \nonumber 
	\end{align}
	where we used the fact that $(\hat{\s_{s}})_{t\leq s \leq u}=\hat{\s_{t}}$, since we reside in $[\pi_{j-1},\pi_j\wedge 1)$, and hence, $\hat{\s_t}$ remains constant.
	By recalling the mutual independence between $X$ and all the $\{(\b^{(i)}_t)\}_{i=1\ldots,n}$, and the tower property, we may write the following:
	\begin{align}
		\E\left[\left. \hat{\b}_u \,\right| \hat{\xi}_t, \eta_t  \right]=\E\left[\left. \E\left[\left. \hat{\b}_u \,\right| X,\hat{\b}_t,\eta_t\right] \,\right| \hat{\xi}_t, \eta_t  \right]=\E\left[\left. \E\left[\left. \hat{\b}_u \,\right|\hat{\b}_t \right] \,\right| \hat{\xi}_t, \eta_t\right]=\frac{1-u}{1-t}\,\E\left[\left. \hat{\b}_t \,\right| \hat{\xi}_t, \eta_t\right].\nonumber
	\end{align}
	With the above expression at hand, we have
	\begin{align}
		\E\left[\left. \hat{\b}_u \,\right| \hat{\xi}_t, \eta_t \right]
			-\E\left[\left. \hat{\b}_t \,\right| \hat{\xi}_t, \eta_t \right] 
			+\int^{u}_{t}\frac{\E[\hat{\b}_s \,|\, \hat{\xi}_t, \eta_t]}{1-s}\dd s=0, \nonumber
	\end{align}
which proves $\E[M_{u} \,|\, \F_t]= M_{t}$ for $(t,u)\in[\pi_{j-1},\pi_j\wedge 1)$.
\end{proof}

\begin{prop} \label{measureSDE}
The measure-valued process $(\nu_t)$ satisfies
\begin{align*}
\nu_t(A) &= \nu(A) + \sum_{j=1}^{C_t}\left(\int_{\pi_{j-1}}^{\pi_j}\int_A \frac{x-\E[X\,|\, \hat{\xi}_s, \eta_s]}{\hat{\s_{s}}^2 (1-s)} \nu_s(\dd x) \d M_s^{(j)}\right)\1\{\mathcal{J}_{\pi_{j-1}}\neq\emptyset \} \\ 
&+\left(\int_{\pi_{C_t}}^{t}\int_A\frac{x-\E[X\,|\, \hat{\xi}_s, \eta_s]}{\hat{\s_{s}}^2 (1-s)} \nu_s(\dd x) \d M_s^{(C_t+1)}\right)\1\{\mathcal{J}_{\pi_{C_t}}\neq\emptyset \}
+ \sum_{s\leq t} (\nu_s(A) - \nu_{s-}(A))\delta_s,
\end{align*}
for $t \in \mathbb{T}_{*}$ and for any $A\in\mathcal{B}(\mathbb{X})$.
\end{prop}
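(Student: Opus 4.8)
The plan is to fix a path of the point field, decompose the time axis $[0,1)$ into the successive random intervals $[\pi_{j-1},\pi_j)$ on which the state $\boldsymbol{\tilde{J}}_t$ --- and hence $\mathcal{J}_t$, the effective volatility $\hat{\s}_{s}$ and the complementary function $\eta_s$ --- is constant, to derive the continuous evolution of $\nu_s(A)$ on each such interval by an Itô argument, and finally to collect the jumps that occur at the interval endpoints. Since $\nu_t(A)=\E[\1_{\{X\in A\}}\,|\,\F_t]$, the initial value is $\nu_0(A)=\nu(A)$, matching the leading term.

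On a fixed interval $[\pi_{j-1},\pi_j)$, Proposition \ref{effectivecomplementary} gives $\nu_s(A)=F(\hat{\xi}_s,s)$, where
\[ F(\xi,s)=\frac{\int_A h(x,\xi,s,\hat{\s}_{s})\,\eta_s(x)\,\nu(\dd x)}{\int_{\mathbb{X}} h(x,\xi,s,\hat{\s}_{s})\,\eta_s(x)\,\nu(\dd x)}, \]
and on this interval $\hat{\s}_s$ and $\eta_s$ are frozen while, by Lemma \ref{lemmaeffectiveinfo}, $\hat{\xi}_s=sX+\hat{\s}_s\hat{\b}_s$ is continuous with $\dd[\hat{\xi}]_s=\hat{\s}_s^2\,\dd s$. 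Rearranging the definition of $M^{(j)}$ yields the diffusion $\dd\hat{\xi}_s=\frac{\E[X\,|\,\hat{\xi}_s,\eta_s]-\hat{\xi}_s}{1-s}\,\dd s+\dd M^{(j)}_s$, and $M^{(j)}$ is an $(\F_s)$-martingale on the interval by Proposition \ref{piecewisemartingale}. I would then apply Itô's formula to $F(\hat{\xi}_s,s)$. Differentiating under the integral sign, the identity $\partial_\xi h(x,\xi,s,\s)=\frac{x}{\s^2(1-s)}\,h(x,\xi,s,\s)$ together with the quotient rule give $\partial_\xi F(\hat{\xi}_s,s)=\int_A \frac{x-\E[X\,|\,\hat{\xi}_s,\eta_s]}{\hat{\s}_s^2(1-s)}\,\nu_s(\dd x)$, which is precisely the stated integrand.

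The finite-variation part of the Itô expansion must vanish: $\nu_s(A)$ is a bounded $(\F_s)$-martingale that is continuous on $[\pi_{j-1},\pi_j)$, so uniqueness of the semimartingale decomposition (after localising at $\pi_j$) forces its drift to zero, leaving $\dd\nu_s(A)=\partial_\xi F\,\dd M^{(j)}_s$; alternatively one checks directly that $\partial_s F+\frac{\E[X\,|\,\hat{\xi}_s,\eta_s]-\hat{\xi}_s}{1-s}\partial_\xi F+\frac12\hat{\s}_s^2\partial_{\xi\xi}F=0$ using the derivatives of $h$. Integrating over the completed intervals produces $\sum_{j=1}^{C_t}\int_{\pi_{j-1}}^{\pi_j}$ and over the current one $\int_{\pi_{C_t}}^{t}$. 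It then remains to account for the endpoints. At a pure deactivation the whole product $\prod_i h(x,\xi^{(i)}(\t^{(i)}_s),\t^{(i)}_s,\s_i)$ is continuous --- the deactivating factor keeps its value because $\t^{(i)}_s=s$ and $\xi^{(i)}$ is continuous --- so $\nu_s(A)=\nu_{s-}(A)$ and no jump is contributed; a jump occurs exactly when a source (re)activates, i.e. when $\delta_s=1$, where the newly revealed $\xi^{(i)}_s$ changes the density by $\nu_s(A)-\nu_{s-}(A)$. Telescoping the continuous increments and these jumps from $\nu_0(A)=\nu(A)$ gives the assertion.

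The main obstacle I anticipate is the careful handling of the stochastic intervals rather than the algebra: justifying the vanishing of the drift via the martingale property on the random interval $[\pi_{j-1},\pi_j)$ (which needs localisation/optional stopping at $\pi_j$ and right-continuity of $(\F_t)$), confirming that $\nu$ is genuinely continuous through deactivations and jumps only at activations, and verifying that one may differentiate $F$ under the integral sign --- the latter following from $X\in\mathcal{L}^2$ together with the Gaussian form of $h$, which secures the requisite integrability.
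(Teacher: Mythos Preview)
Your proposal is correct and follows the same overall strategy as the paper: on each state interval $[\pi_{j-1},\pi_j)$ the quantities $\hat{\s}_s$ and $\eta_s$ are frozen, so one applies It\^o's formula to the quotient representation of $\nu_s(A)$ from Proposition~\ref{effectivecomplementary}, and then pieces together the intervals and the jumps. The paper carries this out by treating the numerator $g$ and denominator $G$ separately, computing $\dd g$, $\dd G$, $\dd\langle g,G\rangle$, $\dd\langle G,G\rangle$ and invoking the It\^o quotient rule; you instead apply It\^o directly to $F=g/G$ and compute $\partial_\xi F$ via the ordinary quotient rule, which is an equivalent but slightly more compact route. Two points in your write-up go beyond the paper's argument: your observation that the finite-variation part must vanish because $\nu_s(A)$ is a bounded $(\F_s)$-martingale is an elegant shortcut (the paper simply lets the drift terms cancel by direct calculation), and your explicit check that pure deactivations leave $\prod_i h(x,\xi^{(i)}(\t^{(i)}_s),\t^{(i)}_s,\s_i)$ continuous---hence $\nu_s(A)=\nu_{s-}(A)$ there---fills in a step the paper's proof takes for granted when writing the jump part with the indicator $\delta_s$.
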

\begin{proof}
Since $\zeta$ on $\mathbb{T}^n$ has finite number of jumps, $(\nu_t)$ can be represented by the sum of the continuous and the discontinuous components via the decomposition
$\nu_t(A) = \nu_t^c(A) + \sum_{s\leq t} \Delta \nu_s(A)$. 
For the continuous part $\nu_t^c(\dd x)$,
\begin{align}
	\nu_t^c(\dd x) = \frac{h\left(x, \hat{\xi}_t^c, t, \hat{\s}_{t}^c \right) \eta_t^c(x) \, \nu(\dd x)}
									{\int_{\mathbb{X}} h\left(x, \hat{\xi}_t^c, t, \hat{\s}_{t}^c \right) \eta_t^c(x) \,\nu(\dd x)}. \nonumber
\end{align}
If $\mathcal{J}_{t}=\emptyset$, then $h\left(x, 0, t, 0 \right)=1$ and $\dd\nu_t^c(\dd x)=0$,  thus we consider time periods where $\mathcal{J}_{t}\neq\emptyset$. As such, the continuous part of volatility $(\hat{\s}_{t}^c)$ is constant between discontinuities and satisfies $(\hat{\s}_{t}^c)>0$. 
Then, we define a function of $\hat{\xi}_t^c$, $\eta_t^c$ and $t$ as
\[ g\left( \hat{\xi}_t^c, t, \eta_t^c ;x,\hat{\s}_{t}^c,\dd x\right) = h\left(x,\hat{\xi}_t^c, t, \hat{\s}_{t}^c \right) \eta_t^c(x) \, \nu(\dd x), \]
and also its integral over $\mathbb{X}$, where
\[ G\left( \hat{\xi}_t^c, t, \eta_t^c ;\hat{\s}_{t}^c\right)=\int_{\mathbb{X}} g\left( \hat{\xi}_t^c, t, \eta_t^c ;x,\hat{\s}_{t}^c,\dd x\right).\]
Using Ito's lemma, we have
\begin{align}
	\dd g\left( \hat{\xi}_t^c, t, \eta_t^c ;x,\hat{\s}_{t}^c,\dd x\right)
		&= \frac{\partial g}{\partial t}\dd t + \frac{\partial g}{\partial \hat{\xi}_t^c} \dd \hat{\xi}_t^c + \frac{\partial^2 g}{2 \partial (\hat{\xi}_t^c)^2}\dd (\hat{\xi}_t^c)^2 \nonumber
	\\
	&\hspace{1.105cm}+ \frac{\partial g}{\partial \eta_t^c}\dd \eta_t^c + \frac{\partial^2 g}{2\partial (\eta_t^c)^2}\dd (\eta_t^c)^2 + \frac{\partial^2 g}{\partial \hat{\xi}_t^c \eta_t^c}\dd \hat{\xi}_t^c \dd \eta_t^c \nonumber
	\\
	&= \frac{\partial g}{\partial t}\dd t + \frac{\partial g}{\partial \hat{\xi}_t^c} \dd \hat{\xi}_t^c + \frac{\partial^2 g}{2 \partial (\hat{\xi}_t^c)^2} (\hat{\s}_{t}^c)^2 \dd t  \nonumber
	\\
	&= g\left( \hat{\xi}_t^c, t, \eta_t^c ;x,\hat{\s}_{t}^c,\dd x\right) \left(\frac{x\hat{\xi}_t^c}{(\hat{\s}_{t}^c)^2(1-t)^2}\dd t + \frac{x}{(\hat{\s}_{t}^c)^2(1-t)}\dd \hat{\xi}_t^c \right), \nonumber
\end{align}
since the quadratic variation of $(\dd \hat{\b}_t^c)$ is $\dd t$ and $\eta_t^c$ (the continuous part of $\eta_t$) is constant between discontinuities.  
Then using Fubini's theorem, we can write
\begin{align}
	\dd G\left( \hat{\xi}_t^c, t, \eta_t^c ;\hat{\s}_{t}^c\right) &= \int_{\mathbb{X}} g\left( \hat{\xi}_t^c, t, \eta_t^c ;x,\hat{\s}_{t}^c,\dd x\right) \left(\frac{x\hat{\xi}_t^c}{(\hat{\s}_{t}^c)^2(1-t)^2}\dd t + \frac{x}{(\hat{\s}_{t}^c)^2(1-t)}\dd \hat{\xi}_t^c \right) \nonumber
	\\
	&= G\left( \hat{\xi}_t^c, t, \eta_t^c ;\hat{\s}_{t}^c\right) \left(\frac{\E[X\,|\, \hat{\xi}_t^c, \eta_t^c]\hat{\xi}_t^c}{(\hat{\s}_{t}^c)^2(1-t)^2}\dd t + \frac{\E[X\,|\, \hat{\xi}_t^c, \eta_t^c]}{(\hat{\s}_{t}^c)^2(1-t)}\dd \hat{\xi}_t^c \right). \nonumber
\end{align}
Finally, for the bracket $\left\langle G,G \right\rangle$, we have
\[\dd \left\langle G,G \right\rangle = G\left( \hat{\xi}_t^c, t, \eta_t^c ;\hat{\s}_{t}^c\right)^2\frac{\E[X\,|\, \hat{\xi}_t^c, \eta_t^c]^2}{(\hat{\s}_{t}^c)^2(1-t)^2}\dd t, \]
and the bracket $\left\langle g,G \right\rangle$ satisfies
\[\dd \left\langle g,G \right\rangle = g\left( \hat{\xi}_t^c, t, \eta_t^c ;x,\hat{\s}_{t}^c,\dd x\right)G\left( \hat{\xi}_t^c, t, \eta_t^c ;\hat{\s}_{t}^c\right)\frac{x\E[X\,|\, \hat{\xi}_t^c, \eta_t^c]}{(\hat{\s}_{t}^c)^2(1-t)^2}\dd t. \]
Using the quotient rule and rearranging terms, we have
\[\dd \nu_t^c(\dd x) = \frac{x - \E[X\,|\, \hat{\xi}^c_t, \eta^c_t]}{(\hat{\s}_{t}^c)^2(1-t)}\nu_t^c(\dd x) \dd M_t^c, \]
for $t \in \mathbb{T}_{*}$ and $\mathcal{J}_{t}\neq\emptyset$, where 
\[\dd M_t^c = \dd \hat{\xi}^c_t - \frac{\E[ X \,|\, \hat{\xi}^c_t, \eta^c_t ] - \hat{\xi}^c_t}{1-t} \d t. \]
The integral in the continuous part of $(M_t)$ is well-defined over $[0,t]$ with respect to the Lebesgue measure. That is, having $\pi^*_j\in[0,t)$ for $j=0,\ldots,m<\infty$ as a subset of $\pi_j$'s where $\mathcal{J}_{\pi_{j-1}}\neq\emptyset$ for $j\geq1$, and denoting $u_{m+1}=\pi^*_{m+1}-=t$,
\begin{eqnarray}
\int_0^t\frac{\E[ X \,|\, \hat{\xi}_s, \eta_s ] - \hat{\xi}_s}{ 1-s} \d s &=&\sum_{j=1}^m \lim_{u_{j-1} \rightarrow (\pi^*_{j-1})+} \hspace{0.1in} \lim_{u_j \rightarrow \pi^*_j-} \int_{u_{j-1}}^{u_{j}}\frac{\E[ X \,|\, \hat{\xi}_s, \eta_s ] - \hat{\xi}_s}{ 1-s} \d s \nonumber
\\
&+&\sum_{j=1}^m \lim_{u_j \rightarrow \pi^*_j+} \hspace{0.1in} \lim_{u_{j+1} \rightarrow (\pi^*_{j+1})-} \int_{u_{j}}^{u_{j+1}}\frac{\E[ X \,|\, \hat{\xi}_s, \eta_s ] - \hat{\xi}_s}{1-s} \d s. \nonumber
\end{eqnarray}
Finally, the discontinuous part can be rewritten as
\begin{align}
\sum_{s\leq t} \Delta \nu_s(\dd x) 
= \sum_{s\leq t}\left(\nu_s(\dd x)-\nu_{s-}(\dd x)\right)\1\{\mathcal{J}_s \backslash \mathcal{J}_{s-} \ne \emptyset \}, \notag
\end{align}
for $t \in \mathbb{T}_{*}$. The statement follows by Lebesgue integration over any $A\in\mathcal{B}(\mathbb{X})$.
\end{proof}

The dynamics of $(M_t)_{t\in\mathbb{T}_*}$ are state-dependent, adapting to the system's information configuration at any given time. The process $(\nu_t)_{t\in\mathbb{T}_*}$ evolves continuously when there is no information switch or when an active source becomes inactive. The dynamics exhibit jumps from one state-dependent martingale to another only when an information source becomes active.
The process stays constant if all information sources are inactive over a random period of time.

\begin{prop} \label{ExpectationSDE}
Let $(W_t)$ be an $(\F_t)$-adapted process defined by $W_t=M_t/\hat{\s}_t$ when $\hat{\s}_t>0$.
\begin{enumerate}
\item Let $t\in[\pi_{j-1},\pi_{j}\wedge 1)$, where $\pi_{j-1}$ and $\pi_{j}$ are two consecutive jump times of $(C_t)$ where $\mathcal{J}_{\pi_{j-1}}\neq\emptyset$. Then $(W_t)_{\pi_{j-1} \leq t < \pi_{j}}$ is an $(\F_t)$-Brownian motion.
\item Let $(W_t^{(j)})$ be the Brownian motion given above for $t\in[\pi_{j-1}, \pi_j \wedge 1)$, where $\mathcal{J}_{\pi_{j-1}}\neq\emptyset$. Then, the $(\F_t)$-martingale $(X_t)_{t\in\mathbb{T}}$ defined by $X_t:=\mathbb{E}[X \,|\, \F_t]$, satisfies
\begin{align*}
	X_t &= \E[X] + \sum_{j=1}^{C_t}\left(\int_{\pi_{j-1}}^{\pi_j} \frac{\Gamma_s}{\hat{\s_{s}} (1-s)}  \d W_s^{(j)}\right)\1\{\mathcal{J}_{\pi_{j-1}}\neq\emptyset \} \\
	&+\left(\int_{\pi_{C_t}}^{t} \frac{\Gamma_s}{\hat{\s_{s}} (1-s)} \d W_s^{(C_t+1)}\right)\1\{\mathcal{J}_{\pi_{C_t}}\neq\emptyset \} + \sum_{s\leq t} (X_s - X_{s-})\delta_s,
\end{align*}
for $t\in\mathbb{T_*}$, where $(\Gamma_t)_{t\in\mathbb{T}_*}$ given by $\Gamma_t =\var[X\,|\, \hat{\xi}_t, \eta_t]$ is an $(\F_t)$-supermartingale.
\item More generally, the $(\F_t)$-martingale $(X_t^{(k)})_{t\in\mathbb{T}}$ defined by $X_t^{(k)}:=\mathbb{E}[X^k \,|\, \F_t]$ for any $k\geq 1$ satisfies
\begin{align*}
	X_t^{(k)} &= \E[X^k] + \sum_{j=1}^{C_t}\left(\int_{\pi_{j-1}}^{\pi_j} \frac{X_s^{(k+1)} - X_s^{(k)}X_s}{\hat{\s_{s}} (1-s)}  \d W_s^{(j)}\right)\1\{\mathcal{J}_{\pi_{j-1}}\neq\emptyset \} \notag \\ 
	&+\left(\int_{\pi_{C_t}}^{t} \frac{X_s^{(k+1)} - X_s^{(k)}X_s}{\hat{\s_{s}} (1-s)} \d W_s^{(C_t+1)}\right)\1\{\mathcal{J}_{\pi_{C_t}}\neq\emptyset \} + \sum_{s\leq t} (X^{(k)}_s - X^{(k)}_{s-})\delta_s,
\end{align*}
for $t\in\mathbb{T}_{*}$, given that $X\in\mathcal{L}^{k+1}(\Omega,\mathcal{G},\P)$.
\end{enumerate}
\end{prop}
\begin{proof}
The random time $\pi_{j-1}$ is $(\F_t)$-measurable, $\hat{\s}_t>0$ for $t\in[\pi_{j-1},\pi_{j}\wedge 1)$ whenever $\mathcal{J}_{\pi_{j-1}}\neq\emptyset$, $W_{\pi_{j-1}}=0$ and the subprocess $(\hat{\b}_t - \hat{\b}_{\pi_{j-1}})_{\pi_{j-1}\leq t < \pi_j}$ is an $(\F_t)$-Brownian bridge. Thus, the bracket $\dd\left\langle W_t, W_t \right\rangle$ for $t\in[\pi_{j-1},\pi_{j}\wedge1)$ is $\dd t$ given $(\F_t)$. Since the paths $(M_t)_{\pi_{j-1}\leq t < \pi_{j}}$ and $(\hat{\s}_t)_{\pi_{j-1}\leq t < \pi_{j}}$ are continuous, $(W_t)_{\pi_{j-1}\leq t < \pi_{j}}$ is an $(\F_t)$-Brownian motion by L\'evy's characterization.
The dynamics given in the second part follows directly from Proposition \ref{measureSDE}, the $(\hat{\s}_t)$-standardization of $(M_t)$ and Lebesgue integration. For the $(\F_t)$-supermartingale property of $(\Gamma_t)$, define $(S_t)_{0\leq t < 1}$ by $S_t = X_t^2$. Using Ito's lemma, $(S_t)$ is an $(\F_t)$-submartingale. Then from Doob-Meyer decomposition,
\begin{align*}
\E\left[ \E[X^2 \,|\, \F_t] - S_t \,|\, \F_s \right] &= \E\left[ \E[X^2 \,|\, \F_t] - \left(Y_t - I_t\right) \,|\, \F_s \right] \notag \\
&\leq \var[X\,|\, \F_s],
\end{align*} 
where $(Y_t)$ is an $(\F_t)$-martingale and $(I_t)$ is an increasing predictable process. The final part is given by the following:
\begin{align*}
	X_t^{(k)} &= \E[X^k] + \sum_{j=1}^{C_t}\left(\int_{\pi_{j-1}}^{\pi_j} \frac{\E[X^{k+1}\,|\, \hat{\xi}_s, \eta_s] - \E[X^{k}\,|\, \hat{\xi}_s, \eta_s]X_s}{\hat{\s_{s}} (1-s)}  \d W_s^{(j)} \right)\1\{\mathcal{J}_{\pi_{j-1}}\neq\emptyset \}\notag \\ 
	&+\left(\int_{\pi_{C_t}}^{t} \frac{\E[X^{k+1}\,|\, \hat{\xi}_s, \eta_s] - \E[X^{k}\,|\, \hat{\xi}_s, \eta_s]X_s}{\hat{\s_{s}} (1-s)} \d W_s^{(C_t+1)}\right)\1\{\mathcal{J}_{\pi_{C_t}}\neq\emptyset \} + \sum_{s\leq t} (X^{(k)}_s - X^{(k)}_{s-})\delta_s,
\end{align*}
which follows from the first two parts and by using Lebesgue integration on Proposition \ref{measureSDE}.
\end{proof}
The endogenous nature of the jump-diffusion, resulting from the modulation of information flow, emerges from the behaviour of $(\F_t)$. The jumps are a direct result of the activation of information coordinates determined by $(\boldsymbol{\tilde{J}}_t \otimes \,\boldsymbol{\xi}_t)$. This is different from generating jump diffusion by directly specifying the process itself as the sum of drifted Brownian motion and a compound Poisson process, see \cite{9}, for example.
\begin{coro}
The system of $(\F_t)$-Brownian motions $(W_t^{(j)})$'s for $j=1,\ldots,C_t+1$ are stochastically-linked through $(\mathcal{J}_t)$.
\end{coro}
Proposition \ref{ExpectationSDE} proves Proposition \ref{ExpectationSDEinit}. The diffusion coefficient $(\theta_t)$ in Proposition \ref{ExpectationSDEinit} can be interpreted as a stochastic volatility process with jumps. This process arises naturally from the system, which is a welcome consequence of the proposed framework without an {\it a priori} assumption on volatility dynamics. In applications, chiefly in asset pricing and financial risk management, the volatility process---and its dynamical equation---are of crucial importance. Accurate estimates of volatilities are important for measuring the risk of financial assets.
\begin{prop}\label{jumpdist}
Let $\mathcal{K}_t = \mathcal{J}_t \backslash \mathcal{J}_{t-}$.	The jump size of $(X_t)$ at time $t$ is
	$X_t-X_{t-} = g(Z) - X_{t-}$,
	where the conditionally normal random variable $Z$ is given by
	\[ Z = \sum_{i\in\mathcal{K}_t} \frac{ \xi_t^{(i)}}{\s_i^2(1-t)}, \]
	and the function $g:\R\rightarrow\R$, for $t \in\mathbb{T}_{*}$, is given by
	\[ g(z) = \frac{\int_{\mathbb{X}} x \exp(xz)\prod_{i \notin \mathcal{K}_t} h\left(x, \xi^{(i)}(\t^{(i)}_t), \t^{(i)}_t, \s_i\right) 
							\prod_{i \in \mathcal{K}_t} \exp\left( \frac{-t x^2}{2\s_i^2(1-t)} \right) \nu(\dd x)}
									{\int_{\mathbb{X}} \exp(xz)\prod_{i \notin \mathcal{K}_t} h\left(x, \xi^{(i)}(\t^{(i)}_t), \t^{(i)}_t, \s_i\right) 
													\prod_{i \in \mathcal{K}_t} \exp\left( \frac{-t x^2}{2\s_i^2(1-t)} \right) \nu(\dd x)}. \]
\end{prop}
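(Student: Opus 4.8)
The plan is to reduce the statement to the single identity $X_t = g(Z)$, since the claimed jump size is just this identity with $X_{t-}$ subtracted from both sides. Because $(X_t)$ jumps only when $\delta_t=1$, that is, when $\mathcal{K}_t = \mathcal{J}_t\setminus\mathcal{J}_{t-}\neq\emptyset$ (as recorded by the jump term $\sum_{s\le t}(X_s-X_{s-})\delta_s$ in Proposition \ref{ExpectationSDE}), I would fix such a time $t$ and compute the post-jump value $X_t=\E[X\,|\,\F_t]=\int_{\mathbb{X}} x\,\nu_t(\dd x)$ directly from the conditional law supplied by Proposition \ref{conditionaldist}.

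First I would partition the product $\prod_i h(x,\xi^{(i)}(\t^{(i)}_t),\t^{(i)}_t,\s_i)$ defining $\nu_t$ into the factors with $i\in\mathcal{K}_t$ and those with $i\notin\mathcal{K}_t$. For the newly activated coordinates $i\in\mathcal{K}_t$, the source is active at $t$, so by the definition of $\t^{(i)}_t$ one has $\t^{(i)}_t=t$ and $\xi^{(i)}(\t^{(i)}_t)=\xi^{(i)}_t$; the corresponding factor is therefore $h(x,\xi^{(i)}_t,t,\s_i)=\exp\{(x\xi^{(i)}_t-tx^2/2)/(\s_i^2(1-t))\}$. I would split each such factor as $\exp\{x\xi^{(i)}_t/(\s_i^2(1-t))\}\exp\{-tx^2/(2\s_i^2(1-t))\}$ and collect the first pieces over $i\in\mathcal{K}_t$ into $\exp(xZ)$ with $Z=\sum_{i\in\mathcal{K}_t}\xi^{(i)}_t/(\s_i^2(1-t))$, which is precisely the random variable in the statement. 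Substituting into $\int_{\mathbb{X}} x\,\nu_t(\dd x)$ then reproduces the ratio defining $g$ evaluated at $z=Z$, giving $X_t=g(Z)$ and hence $X_t-X_{t-}=g(Z)-X_{t-}$.

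The remaining claim is the conditional normality of $Z$. Here I would condition on $X$ together with the pre-jump information $\F_{t-}$ and use that, for each $i\in\mathcal{K}_t$, the variable $\xi^{(i)}_t=tX+\s_i\b^{(i)}_t$ is, given $X$ and the last observed value $\xi^{(i)}(\t^{(i)}_{t-})$, a Gaussian random variable by the Gauss--Markov structure of the Brownian bridge. Since the bridges $\{(\b^{(i)}_t)\}$ are mutually independent and independent of $X$, the family $\{\xi^{(i)}_t\}_{i\in\mathcal{K}_t}$ is conditionally jointly Gaussian, so the linear combination $Z$ is conditionally normal.

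I expect the main obstacle to be the bookkeeping at the jump time rather than any analytic estimate: one must verify that the factors with $i\notin\mathcal{K}_t$ are exactly those appearing in $g$ and carry no new randomness. This rests on the right-continuity and path-continuity conventions --- for a coordinate that is continuously active across $t$ the value $\xi^{(i)}_t$ is $\F_{t-}$-measurable by continuity of the bridge path, for a coordinate inactive at $t$ (including one that deactivates at $t$) the retained value $\xi^{(i)}(\t^{(i)}_t)$ is the stale $\F_{t-}$-measurable last-active value, and only the coordinates in $\mathcal{K}_t$ inject genuinely new information, entirely through $Z$. Some care is also needed to confirm that simultaneous deactivations leave $\nu_t$ unchanged, consistent with the fact that $(X_t)$ jumps only when $\mathcal{K}_t\neq\emptyset$.
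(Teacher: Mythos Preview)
Your proposal is correct and follows essentially the same route as the paper: factor $\prod_i h(\cdot)$ into $i\in\mathcal{K}_t$ and $i\notin\mathcal{K}_t$, split each $h$-factor for $i\in\mathcal{K}_t$ so that the $\xi^{(i)}_t$-pieces aggregate into $\exp(xZ)$, and then read off $X_t=g(Z)$; for the conditional normality, both you and the paper condition on $X$ and $\F_{t-}$ and invoke the Gaussian bridge structure together with mutual independence of the $\{\b^{(i)}\}$. The only notable addition in the paper is that it records the explicit conditional mean and variance of $\xi^{(i)}_t$ and of $Z$ given $X$ and $\F_{t-}$, and even the resulting mixture density of $Z$, whereas you stop at the qualitative Gaussianity; but the statement as written does not require those formulae, so your argument is complete.
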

\begin{proof}
	For $i\in\mathcal{K}_t$, we have the following
	\[ \left.\xi_t^{(i)} \right|_{X, \F_{t-}} \sim \mathcal{N}\left(\xi^{(i)}(\t^{(i)}_{t-})+(t-\t^{(i)}_{t-})X, \,\s_i^2(t-\tau^{(i)}_{t-})(1-t) \right). \]
	Given $X$, the variables $\{\xi_t^{(i)}\}_{i=1,\ldots,n}$ are mutually independent.
	It follows that the conditional distribution of $Z$ is also Gaussian, that is, 
	\[ \left. Z \right|_{X, \F_{t-}} \sim \mathcal{N}\left(\sum_{i \in \mathcal{K}_t} \frac{\xi^{(i)}(\t^{(i)}_{t-})+(t-\t^{(i)}_{t-})X}{\s_i^2(1-t)}, \, 
					\sum_{i \in \mathcal{K}_t}\frac{t-\tau^{(i)}_{t-}}{\s_i^2(1-t)} \right). \]
	Hence, the density of $Z$ is given by
	\[ z \mapsto \frac{1}{\sqrt{2\pi V}}\int_{x\in\R} \exp\left( -\frac{(z-U(x))^2}{2V} \right) \nu_{t-}(\dd x), \]
	where we have defined the following:
	\begin{align*}
		U(x) = \sum_{i \in \mathcal{K}_t} \frac{\xi^{(i)}(\t^{(i)}_{t-})+(t-\t^{(i)}_{t-})x}{\s_i^2(1-t)}, \hspace{0.1in}
		V = \sum_{i \in \mathcal{K}_t}\frac{t-\tau^{(i)}_{t-}}{\s_i^2(1-t)}.
	\end{align*}
Note that one can decompose the conditional distribution of $X$ in terms of $\mathcal{K}_t$ and write
	\[ 	\P[X \in \dd x \,|\, \F_t] 
				= \frac{\exp(xZ)\prod_{i \notin \mathcal{K}_t} h\left(x, \xi^{(i)}(\t^{(i)}_t), \t^{(i)}_t, \s_i\right) 
							\prod_{i \in \mathcal{K}_t} \exp\left(\frac{-t x^2}{2\s_i^2(1-t)} \right) \nu(\dd x)}
									{\int_{\mathbb{X}} \exp(xZ)\prod_{i \notin \mathcal{K}_t} h\left(x, \xi^{(i)}(\t^{(i)}_t), \t^{(i)}_t, \s_i\right) 
													\prod_{i \in \mathcal{K}_t} \exp\left(\frac{-t x^2}{2\s_i^2(1-t)} \right)\nu(\dd x)}. \]	
Hence, the statement follows.
\end{proof}
For the next statement, for $t<1$, we denote $\mu(\hat{\xi}_t, \eta_t,t) =(\E[ X \,|\, \hat{\xi}_t, \eta_t ] - \hat{\xi}_t)/(1-t)$ when $\mathcal{J}_{t}\neq\emptyset$, and $\mu(\hat{\xi}_t, \eta_t,t) = 0$ otherwise. We also recall $\hat{\s}_t=0$ when $\mathcal{J}_{t}=\emptyset$.
\begin{prop}\label{feynmankac}
Let $(\lambda_t)$ be the intensity process of $(N_t)$, and $\psi:\R\rightarrow\R$ and $\phi:\R\rightarrow\R$ be continuous bounded functions.
\begin{enumerate}
\item	The functional $\E\left[\left. e^{-\int_t^1 \psi(s)\dd s}\phi(X) \,\right| \F_t\right]$, hence,
	\[v(\hat{\xi}_t, \eta_t,t) := \E\left[\left. e^{-\int_t^1 \psi(s)\dd s}\phi(X) \,\right| \hat{\xi}_t, \eta_t,X\1_{\{t=1\}}\right], \]
	satisfies the partial differential equation (PDE)
\begin{multline*}
		\frac{\partial v(\hat{\xi}_t, \eta_t,t)}{\partial t} 
			+ \mu(\hat{\xi}_t, \eta_t,t)\frac{\partial v(\hat{\xi}_t, \eta_t,t)}{\partial \hat{\xi}_t} 
			+ \hat{\s}_t^2\frac{\partial^2 v(\hat{\xi}_t, \eta_t,t)}{2\partial \hat{\xi}_t^2}
		\\ 
			- \psi(t)v(\hat{\xi}_t, \eta_t,t) 
			+ (v(\hat{\xi}_t, \eta_t,t) 
			- v(\hat{\xi}_{t-}, \eta_{t-},t-))\lambda_t = 0,
\end{multline*}
with the boundary condition $v(\hat{\xi}_1, \eta_1,1) = \phi(X)$.
\item Choose the random point field $\zeta$ such that $\tilde{J}_t^{(i)}=1$ implies $\tilde{J}_u^{(i)}=1$ for $u\geq t$. Let there be at least one active source of information at $t=0$. Then,
	\[v(\hat{\xi}_t, t) = \E\left[\exp\left(-\int_t^1 \psi(s)\dd s\right)\phi(X) \,\bigg\vert\, \hat{\xi}_t\right], \]
	satisfies the same PDE as $v(\hat{\xi}_t, \eta_t,t)$ with the boundary condition $v(\hat{\xi}_1,1) = \phi(X)$.
\end{enumerate}
\end{prop}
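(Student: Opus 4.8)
The plan is to turn $v$ into an $(\F_t)$-martingale by discounting from time $0$, apply the Itô formula for jump-diffusions, and obtain the equation by forcing the predictable finite-variation part to vanish. Since $\psi$ is a deterministic function of time, I would first note that
\[ Y_t := \exp\left(-\int_0^t\psi(s)\,\dd s\right) v(\hat{\xi}_t,\eta_t,t) = \E\left[\exp\left(-\int_0^1\psi(s)\,\dd s\right)\phi(X)\,\bigg|\,\F_t\right], \]
where the equality uses Proposition \ref{effectivecomplementary} to identify conditioning on $(\hat{\xi}_t,\eta_t)$ with conditioning on $\F_t$. As $\phi$ and $\psi$ are bounded, the right-hand side is the conditional expectation of a single integrable random variable and is therefore an $(\F_t)$-martingale by the tower property. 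The terminal condition $v(\hat{\xi}_1,\eta_1,1)=\phi(X)$ follows because at $t=1$ the discount factor is unity and the conditioning set contains $X\1_{\{t=1\}}=X$.

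Next I would collect the $(\F_t)$-dynamics of $(\hat{\xi}_t,\eta_t)$. On each stochastic interval $[\pi_{j-1},\pi_j)$ between consecutive state changes, the definition of $(M_t)$ together with Proposition \ref{ExpectationSDE} give
\[ \dd\hat{\xi}_t = \mu(\hat{\xi}_t,\eta_t,t)\,\dd t + \hat{\s}_t\,\dd W_t, \]
with $W_t=M_t/\hat{\s}_t$ the piecewise $(\F_t)$-Brownian motion and $\hat{\s}_t$ constant on the interval, while $\eta_t$ is frozen there. I would then record the crucial observation that, for $t<1$, $v(\hat{\xi}_t,\eta_t,t)=\exp(-\int_t^1\psi)\int_{\mathbb{X}}\phi\,\dd\nu_t$ is a continuous functional of $\nu_t$; since $\nu_t$ is continuous at deactivations (the lost source's last value is retained through $\eta_t$, as noted after Proposition \ref{measureSDE}) and jumps only at activations, the discontinuities of $v$ occur exactly at the jumps of $(N_t)$, whose intensity is $\lambda_t$.

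I would then apply Itô's formula to $Y_t$, treating $v$ as $C^{1,2}$ in $(\hat{\xi},t)$ with $\eta$ a frozen parameter on each between-jumps interval. This produces a continuous drift proportional to $-\psi(t)v+\partial_t v+\mu\,\partial_{\hat{\xi}}v+\tfrac12\hat{\s}_t^2\,\partial^2_{\hat{\xi}}v$, a diffusion martingale term $\exp(-\int_0^t\psi)\hat{\s}_t\,\partial_{\hat{\xi}}v\,\dd W_t$, and a pure-jump term $\sum_{s\le t}\exp(-\int_0^s\psi)(v_s-v_{s-})$. Compensating the jump term against $(N_t)$ splits it into an $(\F_t)$-martingale plus the predictable drift $\exp(-\int_0^s\psi)(v_s-v_{s-})\lambda_s\,\dd s$. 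Because $Y_t$ is a martingale, the total predictable finite-variation part must vanish identically; dividing out the strictly positive discount factor yields the stated equation. For the second part I would note that under the monotone activation assumption every coordinate in $\mathcal{J}_t^\complement$ has never yet been active, so $\tau^{(i)}_t=0$, $\xi^{(i)}(\tau^{(i)}_t)=\xi^{(i)}_0=0$, and each factor equals $h(x,0,0,\s_i)=1$; hence $\eta_t\equiv1$. Having one active source at $t=0$ keeps $\mathcal{J}_t$ non-empty and $\hat{\s}_t>0$ throughout, so $v$ depends only on $(\hat{\xi}_t,t)$ and Part 1 gives the claim directly.

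The main obstacle I anticipate is the careful bookkeeping of the jump component: one must justify that deactivations leave $\nu_t$ (hence $v$) continuous while activations alone generate jumps, identify $\lambda_t$ as the intensity compensating precisely those jumps, and patch the between-jumps Itô expansions across the random times $\pi_j$ so that the compensated-jump and diffusion martingales recombine into the single martingale $Y_t$. Verifying sufficient regularity of $v$ in $(\hat{\xi},t)$ to license Itô's formula is the supporting technical point.
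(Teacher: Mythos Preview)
Your proposal is correct and follows essentially the same route as the paper: define the discounted process $Y_t=e^{-\int_0^t\psi}\,v$ as an $(\F_t)$-martingale, apply It\^o's formula between jumps (using $\dd\eta_t=0$ there), compensate the jump sum via the Doob--Meyer decomposition $N_t=\hat N_t+\int_0^t\lambda_s\,\dd s$, and set the predictable finite-variation part to zero; Part~2 is handled identically by observing $\eta_t\equiv1$ under monotone activation. If anything, your explicit justification that deactivations leave $\nu_t$ (hence $v$) continuous---so that only $(N_t)$ governs the jumps---is spelled out more clearly than in the paper's proof.
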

\begin{proof}
	For part one, by use of the Doob-Meyer decomposition,  we can write 
	$N_t = \hat{N}_t + \Lambda_t$,
	where $(\hat{N}_t)$ is an $(\F_t)$-adapted martingale and $\Lambda_t = \int_0^t \lambda_s \dd s$ is the compensator process.
	Transforming the conditional expectation $v(\hat{\xi}_t, \eta_t,t)$ by multiplying it with the corresponding exponential function, we define the martingale $\hat{v}(\hat{\xi}_t, \eta_t,t) = e^{-\int_0^t \psi(s)\dd s} v(\hat{\xi}_t, \eta_t,t)$. 
	Then, by applying Ito's lemma to $\hat{v}(\hat{\xi}_t, \eta_t,t)$ after decomposing it into continuous and discontinuous parts, we have
\begin{align*}
&\frac{\hat{v}(\hat{\xi}_t, \eta_t,t)}{\partial t}\dd t + \frac{\hat{v}(\hat{\xi}_t, \eta_t,t)}{\partial \hat{\xi}_t}\left(\dd M_t + \mu(\hat{\xi}_t, \eta_t,t)\dd t\right) \\
&+ \hat{\s}_t^2\frac{\partial^2 \hat{v}(\hat{\xi}_t, \eta_t,t)}{2\partial \hat{\xi}_t^2}\dd t + \left(\hat{v}(\hat{\xi}_t, \eta_t,t) - \hat{v}(\hat{\xi}_{t-}, \eta_{t-},t-) \right)(\dd\hat{N}_t + \dd\Lambda_t).
\end{align*} 	
Note that for the continuous part, we have $\dd \eta_t = 0$ and $\dd \left\langle \eta_t,\eta_t \right\rangle = 0$. Thus, once the conditional expectation is taken with respect to $\F_t$, the continuous part of $(M_t)$ and the discontinuous part $(\hat{N}_t)$ vanish, 
\begin{multline*}
		e^{-\int_0^t \psi(s)\dd s}\left(\frac{\partial v(\hat{\xi}_t, \eta_t,t)}{\partial t} - \psi(t)v(\hat{\xi}_t, \eta_t,t) + \mu(\hat{\xi}_t, \eta_t,t)\frac{\partial v(\hat{\xi}_t, \eta_t,t)}{\partial \hat{\xi}_t} + \hat{\s}_t^2\frac{\partial^2 v(\hat{\xi}_t, \eta_t,t)}{2\partial \hat{\xi}_t^2}\right)\dd t \\
			+ e^{-\int_0^t \psi(s)\dd s}(v(\hat{\xi}_t, \eta_t,t) 
			- e^{-\int_0^{t-} \psi(s)\dd s}v(\hat{\xi}_{t-}, \eta_{t-},t-))\lambda_t  \dd t = 0,
\end{multline*}			
Once we divide both sides by $e^{-\int_0^t \psi(s)\dd s}\dd t$, the jump part remains to be 
\begin{align*}
\left(v(\hat{\xi}_t, \eta_t,t) - e^{\int_{t-}^{t} \psi(s)\dd s}v(\hat{\xi}_{t-}, \eta_{t-},t-)\right)\lambda_t,
\end{align*}
However, $\int_{t-}^{t} \psi(s)\dd s=0$ due to continuity, and hence, $e^{\int_{t-}^{t} \psi(s)\dd s}=1$.
	Finally, $X\1_{\{t=1\}}$ ensures that the boundary condition is satisfied even if $\mathcal{J}_1=\emptyset$.
	
For part two, active sources never deactivate. Thus for inactive states $i\in\mathcal{J}_t^c$, the complementary information is $\eta_t=1$ for all $t\in[0,1]$, since $\xi^{(i)}(0)=0$ and $\tau_t^{(i)}=0$ must hold for all $t\in[0,1]$. Therefore, we can write $\E\left[\exp\left(-\int_t^1 \psi(s)\dd s\right)\phi(X) \,\bigg\vert\, \F_t \right] =v(\hat{\xi}_t, t)$.  
	The boundary condition follows since at least one coordinate of $(\boldsymbol{J}_t\,\boldsymbol{\xi}_t)$ is always active over $\mathbb{T}$.
\end{proof}

\subsection{Multiple point fields}
As an extension to \cite{4}, we consider a system where the random variable $X$ is given by a function of multiple factors about which observers have differing information. That is, for some $m\in\mathbb{N}_+$, we introduce a vector $\boldsymbol{X}$ of mutually independent random variables $X^\alpha\in\mathcal{L}^2(\Omega,\mathcal{G},\P)$ with law $\nu^\alpha$ for $\alpha=1,\ldots,m$, respectively, each with the state-space $(\mathbb{X},\mathcal{B}(\mathbb{X}))$, where $\mathbb{X}\subseteq\mathbb{R}$. We treat the case where $X=g(X^1,\ldots,X^m)$ for some bounded measurable function $g:\mathbb{X}^m\rightarrow\mathbb{X}$.

Accordingly, we introduce $\R^{n(\alpha)}$-valued $(\mathcal{G}_t)$-adapted multivariate stochastic processes $(\boldsymbol{\xi}^\alpha_t)_{t\in\mathbb{T}}$, for $\alpha=1,\ldots,m$, where $n(\alpha)$ highlights that the dimensions of $(\boldsymbol{\xi}^\alpha_t)$s may be different across $\alpha$. 
As before, we define information coordinates by $\xi^{\alpha,(i)}_t \law t X^\alpha + \s_i^\alpha \b_t^{\alpha,(i)}$,
for $\s_i^\alpha>0$ and $i=1,\ldots,n(\alpha)$. For parsimony, we assume that the standard Brownian bridges $\{\b_t^{\alpha,(i)}\}$s are mutually independent and of $X^\alpha$s across $i=1,\ldots,n(\alpha)$ and $\alpha=1,\ldots,m$.

Next we introduce a set of mutually independent random point fields $\zeta^\alpha$ for $\alpha=1,\ldots,m$ on $\mathbb{T}^{n(\alpha)}$, which are also independent of $(\boldsymbol{\xi}^\alpha_t)$s, each generating a collection of times $\{\varpi^{\alpha,i}_{1}, \ldots,\varpi^{\alpha,i}_{k_i}\}$ for $i=1,\ldots,n(\alpha)$ and finite $k_i\in\mathbb{N}_0$. For every $(\alpha,i)$, we associate the random sequence $\{\varpi^{\alpha,i}_{1}, \ldots,\varpi^{\alpha,i}_{k_i}\}$ to a coordinate of a $(\mathcal{G}_t)$-adapted c\`adl\`ag jump process $(\boldsymbol{\tilde{J}}^\alpha_t)$ with state space $\S=\{0,1\}^{n(\alpha)}$. We then define the $\R^{n(\alpha)\times n(\alpha)}$-valued process $(\boldsymbol{J}^\alpha_t)$ by
$J_t^{\alpha,(i,j)} = \delta_{ij}\boldsymbol{\tilde{J}}^\alpha_t$.
As before, $(\boldsymbol{J}^\alpha_t)$ is a diagonal matrix-valued process indicating which coordinates of $(\boldsymbol{\xi}^\alpha_t)$ are active through $(\boldsymbol{J_t}^\alpha\,\boldsymbol{\xi}^\alpha_t)$ for $\alpha=1,\ldots,m$. 
We define a sub-algebra $\H_t\subseteq \mathcal{G}_t$ by
\begin{align}\label{subFextended} 
\H_t &= \s\left( (\boldsymbol{\tilde{J}}^\alpha_u \otimes \boldsymbol{\xi}^\alpha_u)_{0\leq u \leq t}, (\boldsymbol{\tilde{J}}^\alpha_u)_{0\leq u \leq t}, \boldsymbol{X}\1_{\{t=1\}}; \alpha=1,\ldots,m \right), \nonumber
\end{align}
for $t\in\mathbb{T}$, or equally $\H_t = \s\left( (\boldsymbol{J}^\alpha_u\,\boldsymbol{\xi}^\alpha_u)_{0\leq u \leq t}, (\boldsymbol{J}^\alpha_u)_{0\leq u \leq t}, \boldsymbol{X}\1_{\{t=1\}}; \alpha=1,\ldots,m \right)$. 

We define $\R$-valued effective information processes $(\hat{\xi^\alpha_t})_{t\in\mathbb{T}}$ for $\alpha=1,\ldots,m$ by
\begin{align*}
\hat{\xi^\alpha_t}=(\hat{\s^\alpha_{t}})^2 \sum_{i \in \mathcal{J}^\alpha_t} (\s^\alpha_i)^{-2}\xi^{\alpha,(i)}_t, \hspace{0.1in} \text{where} \hspace{0.1in} \hat{\s^\alpha_{t}} = \left(\sum_{i \in \mathcal{J}^\alpha_t} (\s^\alpha_i)^{-2}\right)^{-1/2},
\end{align*}
where $\mathcal{J}^\alpha_t = \{i : J_t^{\alpha,(i,i)} = 1\}$. We set $\hat{\xi^\alpha_t}=0$ and $\hat{\s^\alpha_{t}}=0$ when $\mathcal{J}^\alpha_t=\emptyset$. In a similar fashion, we define function-valued complementary information processes $(\eta^\alpha_t)_{t\in\mathbb{T}}$ for $\alpha=1,\ldots,m$ by
\begin{equation*}
\eta^\alpha_t: x^\alpha\mapsto \prod_{i\in\mathcal{J}_t^{\alpha \complement}} h\left(x^\alpha, \xi^{\alpha,(i)}(\t^{\alpha,(i)}_t), \t^{\alpha,(i)}_t, \s_i^\alpha\right),
\end{equation*}
where $\mathcal{J}_t^{\alpha \complement} = \{i : J_t^{\alpha,(i,i)} = 0\}$ and $\t^{\alpha,(i)}_t = 0 \vee \sup\{ u : J_u^{\alpha,(i,i)} = 1, u\in[0,t]\}$ for $i=1,\ldots,n(\alpha)$, where by convention $\sup\emptyset = -\infty$. We set $\eta^\alpha_t=1$ when $\mathcal{J}_t^{\alpha \complement} = \emptyset$.
\begin{prop} \label{effectivecomplementaryextended}
The measure $\nu_t(\dd \boldsymbol{x})=\P[\boldsymbol{X} \in \dd \boldsymbol{x} \,|\, \H_t]$ satisfies
\begin{align*}
\nu_t(\dd \boldsymbol{x})	= \prod_{\alpha=1}^m \P[X^\alpha \in \dd x^\alpha \,|\, \hat{\xi}^\alpha_t, \eta^\alpha_t], 
\end{align*}
for  $t \in \mathbb{T}_{*}$, where $(\hat{\xi}^\alpha_t)$ and $(\eta^\alpha_t)$ are defined as above for $\alpha=1,\ldots,m$.
\end{prop}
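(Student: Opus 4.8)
The plan is to exploit the complete decoupling of the $m$ factor systems that is built into the model's independence assumptions, thereby reducing the joint statement to $m$ copies of the single-factor result, Proposition \ref{effectivecomplementary}. First I would introduce, for each $\alpha=1,\ldots,m$, the factor-specific sub-algebra
\[ \H_t^\alpha = \s\left( (\boldsymbol{J}^\alpha_u\,\boldsymbol{\xi}^\alpha_u)_{0\le u\le t}, (\boldsymbol{J}^\alpha_u)_{0\le u\le t}, X^\alpha\1_{\{t=1\}} \right), \]
and observe that $\H_t = \bigvee_{\alpha=1}^m \H_t^\alpha$ by construction. Since the statement concerns $t\in[0,1)$, the indicator terms $X^\alpha\1_{\{t=1\}}$ play no role. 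The crucial structural input is that the triples $(X^\alpha, \{(\b^{\alpha,(i)}_t)\}_i, \zeta^\alpha)$ are mutually independent across $\alpha$; consequently the random elements generating the different $\H_t^\alpha$, together with the associated $X^\alpha$, form mutually independent blocks, so that each pair $(X^\alpha, \H_t^\alpha)$ is independent of the family $\{(X^{\alpha'}, \H_t^{\alpha'})\}_{\alpha'\ne\alpha}$.

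Next I would establish the factorisation of the conditional law over independent blocks: for bounded measurable $\phi_1,\ldots,\phi_m$,
\[ \E\Big[ \textstyle\prod_\alpha \phi_\alpha(X^\alpha) \,\Big|\, \H_t \Big] = \prod_\alpha \E[\phi_\alpha(X^\alpha) \,|\, \H_t^\alpha]. \]
The right-hand side is $\H_t$-measurable because each factor is $\H_t^\alpha$-measurable. To verify that it is the conditional expectation, I would test against products $\prod_\alpha \psi_\alpha$ with $\psi_\alpha$ bounded and $\H_t^\alpha$-measurable; by the monotone class theorem such products (and their linear combinations) suffice to characterise the conditional expectation, precisely because $\H_t=\bigvee_\alpha\H_t^\alpha$. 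Mutual independence of the blocks factorises $\E[\prod_\alpha \phi_\alpha(X^\alpha)\psi_\alpha]$ into $\prod_\alpha \E[\phi_\alpha(X^\alpha)\psi_\alpha]$, and the same independence together with the tower property yields the identical value for the candidate right-hand side. Expressed in terms of kernels, this is exactly $\nu_t(\dd\boldsymbol{x}) = \prod_\alpha \P[X^\alpha\in\dd x^\alpha \,|\, \H_t^\alpha]$.

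Finally I would invoke Proposition \ref{effectivecomplementary} separately for each factor: applying it with $\F_t$, $X$, $\hat{\xi}_t$, $\eta_t$ replaced by $\H_t^\alpha$, $X^\alpha$, $\hat{\xi}^\alpha_t$, $\eta^\alpha_t$ gives $\P[X^\alpha\in\dd x^\alpha\,|\,\H_t^\alpha] = \P[X^\alpha\in\dd x^\alpha\,|\,\hat{\xi}^\alpha_t,\eta^\alpha_t]$. Substituting these marginal identities into the product yields the claim. I expect the main obstacle to be the factorisation step: one must argue cleanly that conditioning on the join $\bigvee_\alpha\H_t^\alpha$ of mutually independent sub-algebras returns the product of the individual conditional laws, taking care that the monotone-class approximation legitimately reduces a general bounded $\H_t$-measurable test function to products across the blocks. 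Once that lemma is in hand, the reduction to the single-factor Proposition \ref{effectivecomplementary} is immediate.
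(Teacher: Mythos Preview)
Your proposal is correct and follows essentially the same route as the paper: factorise the joint conditional law across the $m$ independent blocks, then apply Proposition~\ref{effectivecomplementary} to each factor. The paper's proof dispatches the factorisation step in a single phrase (``Using the independence properties given above''), whereas you spell out the monotone-class verification that justifies it; your version is more careful, but the underlying argument is identical.
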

\begin{proof}
Using the independence properties given above, we have
\begin{align*}
\nu_t(\dd \boldsymbol{x}) &= \prod_{\alpha=1}^m \P[X^\alpha \in \dd x^\alpha \,|\, \s\left( (\boldsymbol{J}^\alpha_u\,\boldsymbol{\xi}^\alpha_u)_{0\leq u \leq t}, (\boldsymbol{J}^\alpha_u)_{0\leq u \leq t}, X^\alpha\1_{\{t=1\}}\right)] \nonumber \\
&= \prod_{\alpha=1}^m \frac{\prod_{i\in\mathcal{J}_t^\alpha} h\left(x^\alpha, \xi^{\alpha,(i)}(\t^{(i)}_t), \t^{\alpha, (i)}_t, \s_i^\alpha\right)
									\prod_{i\in\mathcal{J}_t^{\complement,\alpha}} h\left(x^\alpha, \xi^{\alpha,(i)}(\t^{\alpha, (i)}_t), \t^{\alpha, (i)}_t, \s_i^\alpha\right) \nu(\dd x^\alpha)}
									{\int_{\mathbb{X}} \prod_{i\in\mathcal{J}_t^\alpha} h\left(x^\alpha, \xi^{\alpha,(i)}(\t^{\alpha,(i)}_t), \t^{\alpha,(i)}_t, \s_i^\alpha\right)
											\prod_{i\in\mathcal{J}_t^\complement,\alpha} h\left(x^\alpha, \xi^{\alpha,(i)}(\t^{\alpha,(i)}_t), \t^{\alpha,(i)}_t, \s_i^\alpha\right) \nu(\dd x^\alpha)} \nonumber \\
&= \prod_{\alpha=1}^m\frac{h\left(x^\alpha, \hat{\xi}^\alpha_t, t, \hat{\s_{t}^\alpha} \right) \eta^\alpha_t(x^\alpha) \, \nu^\alpha(\dd x^\alpha)}
									{\int_{\mathbb{X}} h\left(x^\alpha, \hat{\xi}^\alpha_t, t, \hat{\s_{t}^\alpha} \right) \eta^\alpha_t(x^\alpha) \,\nu^\alpha(\dd x^\alpha)},
\end{align*}
by following similar mid-steps as done in Proposition \ref{effectivecomplementary}.
\end{proof}
We define $\pi^\alpha_j = \inf\{t:C^\alpha_t=j\}$,
with $\pi^\alpha_0=0$, where $C^\alpha_t = \sum_{s\leq t} \1\{\boldsymbol{\tilde{J}}^\alpha_{s}\ne\boldsymbol{\tilde{J}}^\alpha_{s-}\}$, and also $N^\alpha_t = \sum_{s\leq t} \delta^\alpha_s$, where $\delta^\alpha_s = \1\{\mathcal{J}^\alpha_s \backslash \mathcal{J}^\alpha_{s-} \ne \emptyset \}$ for $\alpha=1,\ldots,m$.
\begin{prop} \label{ExpectationSDEextended}
Let $(W_t^{\alpha,(j)})$ be mutually independent $(\H_t)$-Brownian motions across $\alpha=1,\ldots,m$ for $t\in[\pi^\alpha_{j-1}, \pi^\alpha_j \wedge 1)$. The $(\H_t)$-martingale $(X_t)_{t\in\mathbb{T}}$ defined by $X_t:=\mathbb{E}[X \,|\, \H_t]$, satisfies
\begin{align*}
	X_t = \E[X] &+ \sum_{\alpha=1}^{m}\sum_{j=1}^{C^\alpha_t}\left(\int_{\pi^\alpha_{j-1}}^{\pi^\alpha_j} \frac{\Theta^\alpha_s}{\hat{\s_{s}^\alpha} (1-s)}  \d W_s^{\alpha,(j)}\right)\1\{\mathcal{J}^\alpha_{\pi^\alpha_{j-1}}\neq\emptyset \} \nonumber \\
	&+\sum_{\alpha=1}^{m}\left(\int_{\pi_{C^\alpha_t}}^{t} \frac{\Theta^\alpha_s}{\hat{\s_{s}^\alpha} (1-s)} \d W_s^{\alpha(C_t+1)}\right)\1\{\mathcal{J}^\alpha_{\pi^\alpha_{C_t}}\neq\emptyset \} + \sum_{\alpha=1}^{m}\sum_{s\leq t} (X_s - X_{s-})\delta^\alpha_s,
\end{align*}
for $t\in\mathbb{T}_{*}$, where $(\Theta^\alpha_t)_{t\in\mathbb{T}}$ is given by $\Theta^\alpha_t =\cov[X,X^\alpha\,|\,\H_t]$ for $\alpha=1,\ldots,m$.
\end{prop}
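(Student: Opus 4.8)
The plan is to reduce the statement to the one-factor Proposition~\ref{ExpectationSDE} by exploiting the conditional-independence structure of Proposition~\ref{effectivecomplementaryextended}. First I would use that proposition to write the best estimate as an integral against the product of the marginal conditional laws: since $X=g(X^1,\ldots,X^m)$ and $\nu_t(\dd\boldsymbol{x})=\prod_{\alpha=1}^m\nu^\alpha_t(\dd x^\alpha)$ with $\nu^\alpha_t(\dd x^\alpha)=\P[X^\alpha\in\dd x^\alpha\,|\,\hat\xi^\alpha_t,\eta^\alpha_t]$, one has $X_t=\int_{\mathbb{X}^m}g(\boldsymbol{x})\prod_{\alpha=1}^m\nu^\alpha_t(\dd x^\alpha)$. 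Each subsystem indexed by $\alpha$ is an independent copy of the one-factor model of Section~\ref{EJD}, driven by its own point field $\zeta^\alpha$ and bridges $\b^{\alpha,(i)}$, so Proposition~\ref{measureSDE} and part~1 of Proposition~\ref{ExpectationSDE} apply verbatim to each $\nu^\alpha_t$, giving its continuous martingale dynamics driven by $W^{\alpha,(j)}=M^{\alpha,(j)}/\hat\s^\alpha$ on $[\pi^\alpha_{j-1},\pi^\alpha_j)$ together with jumps at the activation times $\{s:\delta^\alpha_s=1\}$. The mutual independence of the $\{\b^{\alpha,(i)}\}$ and of the $\zeta^\alpha$ guarantees that the $W^{\alpha,(j)}$ are mutually independent across $\alpha$, so $\langle W^{\alpha,(j)},W^{\beta,(k)}\rangle=0$ for $\alpha\neq\beta$ and, almost surely, no two distinct point fields jump simultaneously.

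The core computation is the It\^o product rule applied to $X_t=\int g\,\d\big(\prod_\alpha\nu^\alpha_t\big)$. By Proposition~\ref{measureSDE} each $\nu^\alpha_t$ is already compensated---it carries no $\dd s$-drift, only the martingale term against $M^{\alpha,(j)}$ and the activation jumps---and, because the continuous parts are driven by independent Brownian motions while the jump times a.s.\ do not coincide, all covariation cross-terms $\d[\nu^\alpha,\nu^\beta]$ vanish; the product rule thus collapses to $\d\big(\int g\prod_\alpha\nu^\alpha_t\big)=\sum_\alpha\int g\,(\d\nu^\alpha_t)\prod_{\beta\neq\alpha}\nu^\beta_t$. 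For the $\alpha$-term I would substitute the single-factor continuous dynamics $\d\nu^\alpha_s(\dd x^\alpha)=\frac{x^\alpha-\E[X^\alpha|\hat\xi^\alpha_s,\eta^\alpha_s]}{(\hat\s^\alpha_s)^2(1-s)}\nu^\alpha_s(\dd x^\alpha)\,\d M^{\alpha,(j)}_s$ and integrate $g$ against it. The factorisation $\nu_s=\prod_\alpha\nu^\alpha_s$ is exactly what turns the spatial integral into a conditional covariance:
\[ \int_{\mathbb{X}^m} g(\boldsymbol{x})\,\frac{x^\alpha-\E[X^\alpha\,|\,\hat\xi^\alpha_s,\eta^\alpha_s]}{(\hat\s^\alpha_s)^2(1-s)}\prod_\beta\nu^\beta_s(\dd x^\beta)=\frac{\cov[X,X^\alpha\,|\,\H_s]}{(\hat\s^\alpha_s)^2(1-s)}=\frac{\Theta^\alpha_s}{(\hat\s^\alpha_s)^2(1-s)}, \]
where the covariance (rather than the variance $\Gamma_s$ of the single-factor case) arises because $g$ couples all the factors while the differentiation touches only the $\alpha$-marginal. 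Substituting $\d M^{\alpha,(j)}_s=\hat\s^\alpha_s\,\d W^{\alpha,(j)}_s$, valid since $\hat\s^\alpha$ is constant on $[\pi^\alpha_{j-1},\pi^\alpha_j)$, converts each first-order term into $\Theta^\alpha_s/(\hat\s^\alpha_s(1-s))\,\d W^{\alpha,(j)}_s$, reproducing the continuous part with the correct interval indexing and the terminal integral on $[\pi^\alpha_{C^\alpha_t},t)$.

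Finally I would account for the jumps. When a source in subsystem $\alpha$ is activated ($\delta^\alpha_s=1$) the marginal $\nu^\alpha_s$ jumps, hence so does $X_s$, contributing $(X_s-X_{s-})\delta^\alpha_s$; deactivations leave $\nu^\alpha_s$, and therefore $X_s$, continuous, exactly as in the single-factor model. Since the point fields are independent, simultaneous jumps occur with probability zero, so the total jump part is the plain sum $\sum_{\alpha=1}^m\sum_{s\le t}(X_s-X_{s-})\delta^\alpha_s$ with no interaction corrections. Collecting the continuous and jump contributions over $\alpha$, together with the $t=0$ value $\E[X]=\int g\prod_\alpha\nu^\alpha(\dd x^\alpha)$, yields the stated representation.

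I expect the main obstacle to be the rigorous bookkeeping of the multi-factor product rule across the random, subsystem-specific jump grids: one must justify that the It\^o expansion may be carried out on the common continuity intervals with vanishing cross-variation terms, that the compensation already encoded in each single-factor measure SDE (whose derivation rests on the martingale property of Proposition~\ref{piecewisemartingale}) survives the coupling through $g$, and that the independent jump times interleave without overlap so that the $\alpha$-indexed integrals $\int_{\pi^\alpha_{j-1}}^{\pi^\alpha_j}$ and the jump sums assemble into a single well-defined semimartingale. The conceptual content is concentrated in the covariance identity displayed above; once the factorisation of Proposition~\ref{effectivecomplementaryextended} is in hand, the remaining steps are a careful but essentially mechanical lifting of Propositions~\ref{measureSDE}--\ref{ExpectationSDE} to the product setting.
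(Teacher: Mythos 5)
Your proposal is correct and follows essentially the same route as the paper: factorise the conditional law via Proposition \ref{effectivecomplementaryextended}, lift the single-factor measure SDE of Proposition \ref{measureSDE} to each marginal $\nu^\alpha_t$, invoke independence of the bridges and point fields to kill the cross-brackets and simultaneous jumps, and integrate $g$ against the product measure so that the coefficient becomes $\cov[X,X^\alpha\,|\,\H_s]$. The paper's own proof is terser, but your explicit covariance identity and product-rule bookkeeping are exactly the steps it leaves implicit.
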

\begin{proof}
Let $\nu_t^\alpha(\dd x^\alpha)=\P[X^\alpha \in \dd x^\alpha \,|\, \hat{\xi}^\alpha_t, \eta^\alpha_t]$ for $\alpha=1,\ldots,m$. Then using the independence properties given above, Proposition \ref{effectivecomplementaryextended} and following similar steps as for Proposition \ref{measureSDE}, there exists a set of $(\H_t)$-Brownian motions $(W_t^{\alpha,(j)})$ over random periods $t\in[\pi^\alpha_{j-1}, \pi^\alpha_j \wedge 1)$, which are mutually independent from each other across $\alpha=1,\ldots,m$, where the brackets $\langle \d W_t^{a,(j)},\d W_t^{b,(j)}\rangle = 0$ for $a,b\in\{1,\ldots,m\}$, such that $(\nu_t)$ satisfies
\begin{align*}
\nu_t(\dd \boldsymbol{x}) &\law \nu_0(\dd \boldsymbol{x}) + \sum_{\alpha=1}^{m}\sum_{j=1}^{C_t^\alpha}\left(\int_{\pi^\alpha_{j-1}}^{\pi^\alpha_j} \frac{x^\alpha-\E[X^\alpha\,|\, \hat{\xi}^\alpha_s, \eta^\alpha_s]}{\hat{\s_{s}^\alpha} (1-s)} \prod_{\alpha=1}^m\nu^\alpha_s(\dd x^\alpha) \d W_s^{\alpha,(j)}\right)\1\{\mathcal{J}^\alpha_{\pi^\alpha_{j-1}}\neq\emptyset \} \nonumber \\ 
&+\sum_{\alpha=1}^{m}\left(\int_{\pi_{C^\alpha_t}}^{t}\frac{x^\alpha-\E[X^\alpha\,|\, \hat{\xi}^\alpha_s, \eta^\alpha_s]}{\hat{\s_{s}^\alpha} (1-s)} \prod_{\alpha=1}^m\nu^\alpha_s(\dd x^\alpha) \d W_s^{\alpha,(C_t+1)}\right)\1\{\mathcal{J}^\alpha_{\pi^\alpha_{C_t}}\neq\emptyset \} \nonumber \\ 
&+ \sum_{\alpha=1}^{m}\sum_{s\leq t} (\nu_s(\dd \boldsymbol{x}) - \nu_{s-}(\dd \boldsymbol{x}))\delta_s^\alpha,
\end{align*}
for $t \in \mathbb{T}_{*}$. From Proposition \ref{effectivecomplementaryextended}, we have $\nu_t(\dd \boldsymbol{x}) = \prod_{\alpha=1}^m\nu^\alpha_t(\dd x^\alpha)$. 
Therefore, since $g:\mathbb{X}^m\rightarrow\mathbb{X}$ is a bounded measurable function, we have
\begin{align*} 
X_t = \int_{\mathbb{X}^m}g(x^1,\ldots,x^m)\nu^1_t(\dd x^\alpha)\ldots\nu^m_t(\dd x^\alpha), 
\end{align*}
and the statement follows via Lebesgue integration.
\end{proof}
The diffusion coefficient of $(X_t)$ takes the form of a stochastic covariance process, which itself jumps when any of the information sources switches on. Hence, $(X_t)$ adapts to a new covariance state of the system under a new set of active information configuration.

The next statement generalizes Proposition \ref{feynmankac} for the functional $\E\left[\left. . \,\right| \H_t\right]$. As before, for $t<1$, we set $\mu^\alpha(\hat{\xi}^\alpha_t, \eta^\alpha_t,t) =(\E[ X^\alpha \,|\, \hat{\xi}^\alpha_t, \eta^\alpha_t ] - \hat{\xi}^\alpha_t)/(1-t)$ when $\mathcal{J}^\alpha_{t}\neq\emptyset$ and $\mu^\alpha(\hat{\xi}^\alpha_t, \eta^\alpha_t,t) =0$ otherwise, for $\alpha=1,\ldots,m$.
\begin{prop}\label{feynmankacextended}
Let $\boldsymbol{\hat{\xi}}_t = [\hat{\xi}^1_t,\ldots,\hat{\xi}^m_t]$, $\boldsymbol{\eta}_t = [\eta^1_t,\ldots,\eta^m_t]$, and $(\lambda^\alpha_t)$ be the intensity process of $(N^\alpha_t)$. Let $\psi:\R\rightarrow\R$ and $\phi:\R\rightarrow\R$ be continuous bounded functions. Then the conditional expectation
	\[v(\boldsymbol{\hat{\xi}}_t, \boldsymbol{\eta}_t,t) = \E\left[\left. e^{-\int_t^1 \psi(s)\dd s}\phi(X) \,\right| \boldsymbol{\hat{\xi}}_t, \boldsymbol{\eta}_t,\boldsymbol{X}\1_{\{t=1\}}\right], \]
	satisfies the partial differential equation
\begin{multline*}
		\frac{\partial v(\boldsymbol{\hat{\xi}}_t, \boldsymbol{\eta}_t,t)}{\partial t} 
			+ \sum_{\alpha=1}^m\mu^\alpha(\hat{\xi}_t, \eta_t,t)\frac{\partial v(\boldsymbol{\hat{\xi}}_t, \boldsymbol{\eta}_t,t)}{\partial \hat{\xi}^\alpha_t} 
			+ \sum_{\alpha=1}^m(\hat{\s}^\alpha_t)^2\frac{\partial^2 v(\boldsymbol{\hat{\xi}}_t, \boldsymbol{\eta}_t,t)}{2(\partial \hat{\xi}^\alpha_t)^2}
		\\ 
			- \psi(t)v(\boldsymbol{\hat{\xi}}_t, \boldsymbol{\eta}_t,t) 
			+ \sum_{\alpha=1}^m(v(\boldsymbol{\hat{\xi}}_t, \boldsymbol{\eta}_t,t) 
			- v(\boldsymbol{\hat{\xi}}_{t-}, \boldsymbol{\eta}_{t-},t-))\lambda^\alpha_t = 0,
\end{multline*}
with the boundary condition $v(\boldsymbol{\hat{\xi}}_1, \boldsymbol{\eta}_1,1) = \phi(X)$.
\end{prop}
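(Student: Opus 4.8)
The plan is to reduce the claim to the single-factor Feynman--Kac representation of Proposition \ref{feynmankac}, exploiting the product structure of the conditional law established in Proposition \ref{effectivecomplementaryextended} together with the mutual independence of the factor-wise driving noises supplied by Proposition \ref{ExpectationSDEextended}. First I would note that, since Proposition \ref{effectivecomplementaryextended} gives $\P[\boldsymbol{X}\in\dd\boldsymbol{x}\,|\,\H_t]=\prod_{\alpha=1}^m\P[X^\alpha\in\dd x^\alpha\,|\,\hat{\xi}^\alpha_t,\eta^\alpha_t]$, the conditional law of $\boldsymbol{X}$---and hence of $X=g(\boldsymbol{X})$---given $\H_t$ is a deterministic function of the state vector $(\boldsymbol{\hat{\xi}}_t,\boldsymbol{\eta}_t,t)$ alone, together with the terminal datum $\boldsymbol{X}\1_{\{t=1\}}$. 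This legitimises writing the discounted conditional expectation as $v(\boldsymbol{\hat{\xi}}_t,\boldsymbol{\eta}_t,t)$, and at $t=1$ the explicit inclusion of $\boldsymbol{X}\1_{\{t=1\}}$ (reinforced by the bridge property $\xi^{\alpha,(i)}_1=X^\alpha$) makes $X$ known, yielding the boundary condition $v(\boldsymbol{\hat{\xi}}_1,\boldsymbol{\eta}_1,1)=\phi(X)$.

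The second step sets up the martingale. Writing $D_t=\exp(-\int_0^t\psi(s)\dd s)$, the process $Y_t=D_t\,v(\boldsymbol{\hat{\xi}}_t,\boldsymbol{\eta}_t,t)$ equals $\E[D_1\phi(X)\,|\,\H_t]$ by the tower property, so it is a genuine $(\H_t)$-martingale, and the PDE emerges by forcing the finite-variation part of $\dd Y_t$ to vanish. To compute $\dd Y_t$ I would record the dynamics of each effective coordinate: on an interval $[\pi^\alpha_{j-1},\pi^\alpha_j)$ between state changes of $(\boldsymbol{\tilde{J}}^\alpha_t)$ the decomposition defining $(M_t)$ gives $\dd\hat{\xi}^\alpha_t=\mu^\alpha(\hat{\xi}^\alpha_t,\eta^\alpha_t,t)\,\dd t+\hat{\s}^\alpha_t\,\dd W^\alpha_t$, with $\eta^\alpha_t$ constant on that interval so that it enters only through the jump term, and by Proposition \ref{ExpectationSDEextended} the driving martingales $W^\alpha$ are mutually independent, so $\langle W^a,W^b\rangle\equiv0$ for $a\neq b$.

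Applying the multivariate It\^o formula for jump-diffusions to $v$ and the product rule to $Y_t=D_tv_t$, the continuous part contributes $D_t\bigl[\partial_t v+\sum_\alpha\mu^\alpha\partial_{\hat{\xi}^\alpha}v+\tfrac{1}{2}\sum_\alpha(\hat{\s}^\alpha_t)^2\partial^2_{\hat{\xi}^\alpha}v-\psi(t)v\bigr]\dd t$ plus the local-martingale term $D_t\sum_\alpha\hat{\s}^\alpha_t\partial_{\hat{\xi}^\alpha}v\,\dd W^\alpha_t$; the absence of mixed second-order terms is precisely where the cross-independence of the $W^\alpha$ is used. The jump part of $v$ is the sum over state changes of $v_s-v_{s-}$, and here I would invoke the defining feature of the effective/complementary split---that deactivations are invisible to the conditional law---so that $v$ jumps only when some source is activated, i.e. along $\sum_\alpha\delta^\alpha_s$. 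Compensating this marked counting measure by the intensities $\lambda^\alpha_t$ of $(N^\alpha_t)$ produces the martingale increment $\sum_\alpha(v_s-v_{s-})(\dd N^\alpha_s-\lambda^\alpha_s\,\dd s)$ together with the compensator drift $\sum_\alpha(v_t-v_{t-})\lambda^\alpha_t\,\dd t$. Collecting all $\dd t$ terms and setting them to zero, as the martingale property of $Y_t$ demands, yields exactly the stated equation.

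The main obstacle, beyond bookkeeping, is the correct treatment of the jump term: one must argue rigorously that $v$ is continuous across deactivation events, so that only the activation intensity $\lambda^\alpha_t$ enters. This rests on the fact that at a deactivation $\hat{\xi}^\alpha_t$ and $\eta^\alpha_t$ jump simultaneously in a way that leaves the product $h(x,\hat{\xi}^\alpha_t,t,\hat{\s}^\alpha_t)\eta^\alpha_t(x)$---and hence $\P[X^\alpha\in\dd x^\alpha\,|\,\hat{\xi}^\alpha_t,\eta^\alpha_t]$---unchanged, the now-inactive coordinate's contribution merely migrating from the effective to the complementary process. A secondary technical point is the $C^{1,2}$-regularity of $v$ in $(\boldsymbol{\hat{\xi}},t)$ needed for It\^o's formula, which follows from the smoothness of the Gaussian kernel $h$ and the boundedness of $\phi$ and $\psi$, with dominated convergence justifying differentiation under the integral in the representation of $v$ furnished by Proposition \ref{effectivecomplementaryextended}. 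Since the generator so obtained is simply the sum of the $m$ single-factor generators of Proposition \ref{feynmankac}, the argument is a direct multivariate extension of that proof.
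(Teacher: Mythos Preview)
Your proposal is correct and follows essentially the same route the paper indicates: the authors omit the proof, stating only that it repeats the steps of Propositions \ref{measureSDE} and \ref{feynmankac} with the factor-wise independence overlaid so that the cross-brackets $\langle \dd\hat{\xi}^{a},\dd\hat{\xi}^{b}\rangle$ vanish for $a\neq b$. Your write-up is in fact more explicit than the paper's---in particular your justification that deactivation events leave $v$ continuous (so only the activation intensities $\lambda^\alpha$ appear) and your remark on $C^{1,2}$-regularity go beyond what the paper spells out, but the underlying argument is the same.
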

We omit the proof of Proposition \ref{feynmankacextended} to avoid repetition, which follows from similar steps as done in Proposition \ref{measureSDE} and Proposition \ref{feynmankac} by overlaying the independence properties given above, where we see that the brackets $\langle \d\hat{\xi}_t^{a,(j)},\d \hat{\xi}_t^{b,(j)}\rangle = 0$ for $a\neq b$ and $a,b\in\{1,\ldots,m\}$.

\subsection{Modulation as projection}
\label{modprojsec}
For any $t\in\mathbb{T}$ and for any fixed $\alpha=1,\ldots,m$, $\boldsymbol{\tilde{J}}^\alpha_t \otimes \,\boldsymbol{\xi}^\alpha_t \mapsto\boldsymbol{J}^\alpha_t\,\boldsymbol{\xi}^\alpha_t$ defines an orthogonal projection from the space of all available information sources to an information subspace. That is, $(\boldsymbol{J}^\alpha_t)$ is a projection-valued stochastic process acting on $\boldsymbol{\xi}^\alpha_t$, whereas $\boldsymbol{A}^\alpha_t = \boldsymbol{I} - \boldsymbol{J}^\alpha_t$, with $\boldsymbol{I}$ the identity matrix, is an annihilator matrix determining a complementary information subspace. This projection can also be formalized by starting from an enlarged filtration, where 
\begin{equation}\label{enlargedfiltration} 
\Z_t = \s\left( (\boldsymbol{\xi}^\alpha_u)_{0\leq u \leq t}, (\boldsymbol{J}^\alpha_u)_{0\leq u \leq t}, \boldsymbol{X}\1_{\{t=1\}}; \alpha=1,\ldots,m \right), \nonumber
\end{equation}
is a subalgebra $\Z_t \subseteq \mathcal{G}_t$ for $t\in\mathbb{T}$. Thus, $\H_t \subseteq \Z_t$ and 
\begin{align}
\P[\boldsymbol{X} \in \dd \boldsymbol{x} \,|\, \H_t] = \E\left[\P\left[\boldsymbol{X} \in \dd \boldsymbol{x}) \,|\, \Z_t \right] \,|\, \H_t \right],  \notag
\end{align}
is a projection. Having $\xi^{\alpha,(i)}_t \law t X^\alpha + \s_i^\alpha \b_t^{\alpha,(i)}$,
for $\s_i^\alpha>0$ and $i=1,\ldots,n(\alpha)$, we can outright define the full effective processes
\begin{align*}
\Psi^\alpha_t=(\hat{\epsilon^\alpha})^2 \sum_{i =1}^{n(\alpha)} (\s^\alpha_i)^{-2}\xi^{\alpha,(i)}_t, \hspace{0.1in} \text{where} \hspace{0.1in} \hat{\epsilon^\alpha} = \left(\sum_{i=1}^{n(\alpha)} (\s^\alpha_i)^{-2}\right)^{-1/2},
\end{align*}
without the need for any complementary process, and where $\hat{\epsilon^\alpha}$s are constants and $\hat{\epsilon^\alpha}>0$ for $\alpha=1\ldots,m$. Accordingly, we can define $(\Z_t)$-martingales $(M_t^\alpha)_{t\in\mathbb{T}_*}$ by
\begin{align*}
M_t^\alpha = \Psi_t^\alpha - \int_0^t \frac{\E[ X^\alpha \,|\, \Psi^\alpha_t ] - \Psi_t^\alpha}{ 1 - s}\dd s,
\end{align*}
where the martingale property follows as a simple case of Proposition \ref{piecewisemartingale}.
Therefore, $(B_t^\alpha)_{t\in\mathbb{T}_*}$ defined by $B_t^\alpha = M_t^\alpha / \epsilon^\alpha$ are $(\Z_t)$-Brownian motions for $\alpha=1\ldots,m$. Hence, using Proposition \ref{effectivecomplementary} and following similar steps as done in Proposition \ref{ExpectationSDEextended}, the measure $\hat{\nu}_t(\dd \boldsymbol{x})=\P[\boldsymbol{X} \in \dd \boldsymbol{x} \,|\, \Z_t]$ satisfies
\begin{align*}
\hat{\nu}_t(\dd \boldsymbol{x}) &= \P\left[\boldsymbol{X} \in \dd \boldsymbol{x}) \,| \{ \Psi^\alpha_t : \alpha=1\ldots,m \}\right] \notag \\
&=\hat{\nu}_0(\dd \boldsymbol{x}) + \sum_{\alpha=1}^{m}\int_{0}^{t} \frac{x^\alpha-\E[X^\alpha\,|\, \Psi^\alpha_s]}{\epsilon^\alpha(1-s)} \hat{\nu}_s(\dd \boldsymbol{x}) \d B_s^{\alpha} \nonumber \\
&=\hat{\nu}_0(\dd \boldsymbol{x}) + \int_{0}^{t}\hat{\nu}_s(\dd \boldsymbol{x})\sum_{\alpha=1}^{m} \Gamma_s^\alpha \d B_s^{\alpha}, \nonumber
\end{align*}
where $(\Gamma_s^\alpha)_{0\leq t < 1}$ is defined correspondingly as above for $\alpha=1\ldots,m$. Then, following similar steps as done in Proposition \ref{effectivecomplementaryextended} and solving the SDE above, we have
\begin{align*}
\nu_t(\dd \boldsymbol{x})	&= \E\left[ \hat{\nu}_t(\dd \boldsymbol{x}) \,\left.|\, \hat{\xi}^\alpha_t, \eta^\alpha_t \right.\right] \notag \\
&= \E\left[\prod_{\alpha=1}^m \P\left[ X^\alpha \in \dd x^\alpha \,\bigg|\, \{ \Psi^\alpha_t : \alpha=1\ldots,m \} \right]  \,\bigg|\, \hat{\xi}^\alpha_t, \eta^\alpha_t \right] \notag \\
&= \nu_0(\dd \boldsymbol{x})\E\left[\exp\left(-\frac{1}{2} \int_0^t \sum_{\alpha=1}^{m} (\Gamma_s^\alpha)^2  \d s + \int_0^t \sum_{\alpha=1}^{m} \Gamma_s^\alpha \d B_s^{\alpha} \right) \,\bigg|\, \hat{\xi}^\alpha_t, \eta^\alpha_t \right],
\end{align*}
since $\nu_0(\dd \boldsymbol{x}) = \hat{\nu}_0(\dd \boldsymbol{x})$, the bracket $\langle \d B_t^{\alpha},\d B_t^{\beta}\rangle = 0$ for $\alpha,\beta\in\{1,\ldots,m\}$, and since Novikov's condition holds:
\begin{align*}
\E\left[\exp\left(\frac{1}{2} \int_0^t \bigg| \sum_{\alpha=1}^{m}\sum_{\beta=1}^{m}  \Gamma_s^\alpha \Gamma_s^\beta \bigg|^2 \d \langle B_s^{\alpha}, B_s^{\beta} \rangle \right) \right] < \infty.
\end{align*}
This motivates us to ask how far we can push this idea towards its logical limits for modelling information modulation. To maintain generality, we let $(\boldsymbol{\xi}^\alpha_t)_{t\in\mathbb{T}}$ below to be LRBs, not necessarily Brownian bridge information processes, nor even the same class of LRB across $\alpha=1\ldots m$.
\begin{defn}
A \emph{modulated \levy random bridge information system} is given by the probability space $(\Omega, \mathcal{H},\P)$ equipped with a right-continuous and complete filtration $(\mathcal{H}_t)_{t\in\mathbb{T}}$, where
\begin{enumerate}
\item $\H_t = \s\left( (\boldsymbol{P}^\alpha_u\,\boldsymbol{\xi}^\alpha_u)_{0\leq u \leq t}, (\boldsymbol{P}^\alpha_u)_{0\leq u \leq t}, \boldsymbol{X}\1_{\{t=T\}} ; \alpha=1,\ldots,m \right)$.
\item $(\boldsymbol{\xi}^\alpha_t)_{t\in\mathbb{T}}$ is an $\R^{n(\alpha)}$-valued \levy random bridge with generating law $\nu^{\alpha}$ for $\alpha=1,\ldots,m$.
\item $(\boldsymbol{P}^\alpha_t)_{t\in\mathbb{T}}$ is an $n(\alpha)$-dimensional projection-valued stochastic process with real c\`adl\`ag paths for $\alpha=1,\ldots,m$.
\item $\boldsymbol{X}\in\mathcal{L}^2(\Omega,\mathcal{G},\P)$ is an $m$-dimensional vector of random variables with law $\boldsymbol{\nu}$ and state-space $(\mathbb{X}^m,\otimes^{m}_{\alpha=1}\mathcal{B}(\mathbb{X}))$, where $\mathbb{X}\subseteq\R$ and $\mathcal{B}(\mathbb{X})$ is the Borel $\sigma$-field.
\item $\mathbb{T}=[0,T]$ is a finite interval for some $T<\infty$.
\end{enumerate}
\end{defn}
This definition of the system does not explicitly rely on any random point field. However, one can assign a law to $(\boldsymbol{P}^\alpha_t)_{t\in\mathbb{T}}$ via associating it with a random point field $\zeta^\alpha$. Although we leave a detailed study of such a system for future research, we shall nonetheless provide a simple example to show how---what we call---\emph{information mixing} arises.
\begin{ex}
Let $m=1$, $(\boldsymbol{\xi}_t)_{t\in\mathbb{T}}$ is such that $\xi^{(i)}_t \law t X + \s_i \b_t^{(i)}$ for $\s_i^\alpha>0$, and $p_t^{(ij)}$ denote the $(i,j)$th coordinate of an $n$-dimensional projection matrix $\boldsymbol{P}_t$. Then at any $t\in\mathbb{T}$, the $\sigma$-algebra $\H_t$ allows the mapping $\boldsymbol{P}_t\,\boldsymbol{\xi}_t \mapsto \boldsymbol{\psi}_t$, where
\[
	\psi_t^{(i)} = 
	\left\{ \begin{aligned}
		&0, && \text{if $p_t^{(ij)} =0$ for all $j\in\{1,\ldots,n\}$,}
		\\ &tX + \hat{p}_t^{(i)}\sum_{j=1}^n p_t^{(ij)}\sigma_j\beta_t^{(j)}, &&\text{otherwise,}
	\end{aligned} \right.
\]
and where $\hat{p}_t^{(i)} = \left(\sum_{j=1}^n p_t^{(ij)}\right)^{-1}$. Thus, $\psi_t^{(i)}\law tX + \alpha^{(i)}_tB^{(i)}_t$ given that $(B^{(i)}_t)_{t\in\mathbb{T}}$ is a standard Brownian bridge and the bridge coefficient is
\begin{align*}
\alpha^{(i)}_t = \hat{p}_t^{(i)}\left(\sum_{j=1}^n (p_t^{(ij)})^2\sigma_j^2\right)^{\frac{1}{2}}.
\end{align*}   
When $\boldsymbol{P}_t$ is diagonal (e.g. $\boldsymbol{P}_t = \boldsymbol{J}_t$), the observer of $\boldsymbol{P}_t\,\boldsymbol{\xi}_t$ also observes the active coordinates of $\boldsymbol{\xi}_t$, possibly scaled. Generally, the observer may not be able to decouple all the active coordinates of $\boldsymbol{\xi}_t$ from the observed $\boldsymbol{P}_t\,\boldsymbol{\xi}_t$, since $\boldsymbol{P}_t$ is non-invertible, unless $\boldsymbol{P}_t=\boldsymbol{I}$, for it is a projection. 
Hence, the original information provided from each source may get mixed with another. 
\end{ex}

\section{Applications} \label{sec:application}
Thus far, we have not specified an interpretation for $X$. Next, we apply the framework to finance in the spirit of \cite{4}. We assume that $\P$ is the pricing measure, $(\F_t)$ is the market information and $X$ is the cash flow of a financial asset at $t=1$. We let the risk-free system be deterministic and denote the system of discount functions by $(P_{0t})_{0 \leq t <\infty}$. We assume that $P_{0t}$ is differentiable, strictly decreasing and that it satisfies $0 < P_{0t} \leq 1$ and $\lim_{t\rightarrow\infty}P_{0t}=0$. The no-arbitrage condition implies that $P_{t1}=P_{01}/P_{0t}$ for $t\leq 1$. 
We write $\overline{X}_t = P_{t1}X_t = P_{t1}\E[X\,|\,\F_t]$ for the price at time $t$. The price process $(\overline{X}_t)$ has jump-diffusion dynamics, see Section \ref{EJD}.    

\subsection{Merton-type jump-diffusion models for vanilla options}
We choose $\zeta$ such that $\tilde{J}_t^{(i)}=1$ implies $\tilde{J}_u^{(i)}=1$ for $u\geq t$. This removes any path-dependency in the system caused by randomly paused flows of information. We also let there be at least one active source of information at $t=0$, so that some information about $X$ is always available to market participants. In this setting, if $\xi^{(i)}_t \law t X + \s_i \b_t^{(i)}$, Proposition \ref{ExpectationSDEinit} reduces down to 
\begin{align*}
	X_t &= \E[X] + \sum_{j=1}^{C_t}\int_{\pi_{j-1}}^{\pi_j} \theta_s  \d W_s^{(j)} +\int_{\pi_{C_t}}^{t} \theta_s \d W_s^{(C_t+1)} + \sum_{s\leq t} (X_s - X_{s-})\delta_s.
\end{align*}
As an example, we shall price a European-style call option with strike price $K$ and exercisable at a fixed time $t\in(0,1)$. The price at time zero is given by $C_{0}=P_{0t}\E\left[(\overline{X}_{t}-K)^{+}\right]$ where $0 < t < 1$. , and to make the derivation easier to follow, we represent processes explicitly as a function of the state $k\in \S$, e.g., we write $\hat{\xi}_t(k)$ for $\boldsymbol{\tilde{J}}_t=k$.

\begin{prop} \label{callprice}
The price at $t=0$ of the European-style call option is given by
\begin{align}
C_{0}&=P_{0t}\sum_{k\in\S}\P(\boldsymbol{\tilde{J}}_{t} = k)\int_{\mathbb{X}}x\mathcal{N}\left(-\hat{z}_{t}(k)+\frac{x\sqrt{t}}{\hat{\sigma}_t^2(k)\sqrt{(1-t)}}\right)\nu(\dd x) \nonumber \\
&-P_{0t}\sum_{k\in\S}\P(\boldsymbol{\tilde{J}}_{t} = k)K\int_{\mathbb{X}}\mathcal{N}\left(-\hat{z}_{t}(k)+\frac{x\sqrt{t}}{\hat{\sigma}_t^2(k)\sqrt{(1-t)}}\right)\nu(\dd x),\nonumber
\end{align}
for $t\in(0,1)$, where $\mathcal{N}(\cdot)$ is the standard normal distribution and $\hat{z}_{t}(k)=\hat{\varsigma}(k)/\sqrt{t(1-t)}$, where $\hat{\varsigma}(k)$ is the unique solution to
\begin{align}
\int_{\mathbb{X}}(P_{t1}x-K)\exp\left[\frac{1}{\hat{\sigma}_t^2(k)(1-t)}\left(x \hat{\varsigma}(k)-tx^2/2\right)\right]\nu(\dd x)=0. \nonumber
\end{align}
\end{prop}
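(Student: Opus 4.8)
The plan is to condition on the state $\boldsymbol{\tilde{J}}_t=k$ of the modulating process at the exercise time $t$, and then to reduce each conditional problem to a one-dimensional Brownian-bridge option valuation of the type treated in \cite{4}. Since the point field $\zeta$ is independent of $(\boldsymbol{\xi}_t)$ and of $X$, conditioning on the event $\{\boldsymbol{\tilde{J}}_t=k\}$ leaves the joint law of $X$ and the active bridges unchanged, so that
\[ C_0 = P_{0t}\,\E\big[(P_{t1}X_t-K)^+\big] = P_{0t}\sum_{k\in\S}\P(\boldsymbol{\tilde{J}}_t=k)\,\E\big[(P_{t1}X_t-K)^+\,\big|\,\boldsymbol{\tilde{J}}_t=k\big]. \]
Under the chosen field, where $\tilde{J}^{(i)}_t=1$ implies $\tilde{J}^{(i)}_u=1$ for $u\ge t$, no source is ever deactivated; hence $\mathcal{J}_t^\complement$ contains only never-activated coordinates, for which $\tau^{(i)}_t=0$ and $h(x,0,0,\s_i)\equiv 1$, so $\eta_t\equiv 1$. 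Proposition \ref{effectivecomplementary} then gives $X_t=\E[X\,|\,\hat{\xi}_t]$, and by Lemma \ref{lemmaeffectiveinfo} the conditional law of $\hat{\xi}_t$ given $\{\boldsymbol{\tilde{J}}_t=k\}$ is that of a single Brownian-bridge information process $\hat{\xi}_t=tX+\hat{\s}_t(k)\hat{\b}_t$ with deterministic effective volatility $\hat{\s}_t(k)$. Each conditional expectation is therefore a standard information-based call price with volatility $\hat{\s}_t(k)$, and summing over $k$ reproduces the stated weighted sum.

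For the core computation in a fixed state $k$ I would first establish that $y\mapsto\E[X\,|\,\hat{\xi}_t=y]$ is strictly increasing: the measure $\nu_t$ is an exponential family in $\hat{\xi}_t$, so $\partial_y\E[X\,|\,\hat{\xi}_t=y]=\var[X\,|\,\hat{\xi}_t=y]/(\hat{\s}_t^2(k)(1-t))>0$. Monotonicity identifies the exercise set $\{P_{t1}X_t>K\}$ with a half-line $\{\hat{\xi}_t>\hat{\varsigma}(k)\}$, where $\hat{\varsigma}(k)$ is the unique level at which $P_{t1}\E[X\,|\,\hat{\xi}_t=\hat{\varsigma}(k)]=K$; clearing the normalising denominator of $\nu_t$ turns this identity into the defining equation $\int_{\mathbb{X}}(P_{t1}x-K)\,h(x,\hat{\varsigma}(k),t,\hat{\s}_t(k))\,\nu(\dd x)=0$ stated in the proposition. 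I would then write the conditional payoff as $\E[(P_{t1}X_t-K)\1_{\{\hat{\xi}_t>\hat{\varsigma}(k)\}}]$ and exploit the fact that the joint law of $(X,\hat{\xi}_t)$ factorises as a Gaussian prefactor in $\hat{\xi}_t$ times $h(x,\hat{\xi}_t,t,\hat{\s}_t(k))\,\nu(\dd x)$. The ratio structure of $X_t=\E[X\,|\,\hat{\xi}_t]$ cancels the normalising denominator, Fubini swaps the $x$- and $\hat{\xi}_t$-integrals, and the inner integral over $(\hat{\varsigma}(k),\infty)$ of the conditional density of $\hat{\xi}_t\,|\,X=x\sim\mathcal{N}(tx,\hat{\s}_t^2(k)\,t(1-t))$ yields a normal-distribution-function term. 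Separating the $P_{t1}x$ and $-K$ contributions produces the two integrals against $\nu$ in the statement, with $\hat{z}_t(k)=\hat{\varsigma}(k)/\sqrt{t(1-t)}$ appearing in the argument of $\mathcal{N}$.

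I expect the main obstacle to be the monotonicity and uniqueness step, namely confirming that $y\mapsto\E[X\,|\,\hat{\xi}_t=y]$ is strictly increasing so that the exercise region is a genuine half-line, and that the defining equation for $\hat{\varsigma}(k)$ admits a unique solution, given that $\mathbb{X}$ may be unbounded and $\nu$ is arbitrary. Everything downstream---the interchange of the order of integration and the emergence of $\mathcal{N}(\cdot)$---is then routine Gaussian bookkeeping, with the only delicate point being to match the normalising constants appearing in the argument of $\mathcal{N}$.
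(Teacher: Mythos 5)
Your proposal is correct and arrives at the stated formula, and it shares the paper's first step exactly: both condition on the state $\boldsymbol{\tilde{J}}_t=k$ via the law of total expectation (using independence of $\zeta$ from $(\boldsymbol{\xi}_t)$ and the fact that $\eta_t\equiv 1$ under the no-deactivation assumption) and thereby reduce the problem to a one-factor information-based call price with effective volatility $\hat{\s}_t(k)$ in each state. Where you diverge is in the per-state computation. The paper follows \cite{4} and performs a change of measure to an auxiliary measure $\mathbb{B}$ with Radon--Nikodym density $1/\Phi_t(k)$ (the reciprocal of the normalising constant of $\nu_t$), verifies the martingale property via Novikov's condition, and then evaluates the payoff as a $\mathbb{B}$-expectation under which $\hat{\xi}_t(k)$ is unconditionally Gaussian. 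You instead stay under $\P$, use strict monotonicity of $y\mapsto\E[X\,|\,\hat{\xi}_t=y]$ (whose derivative you correctly identify as $\var[X\,|\,\hat{\xi}_t=y]/(\hat{\s}_t^2(k)(1-t))$) to identify the exercise region as the half-line $\{\hat{\xi}_t>\hat{\varsigma}(k)\}$, cancel the normalising denominator against the ratio structure of $X_t$, and integrate the Gaussian conditional law $\hat{\xi}_t\,|\,X=x\sim\mathcal{N}(tx,\hat{\s}_t^2(k)t(1-t))$ by Fubini. The two routes are equivalent --- the density $1/\Phi_t(k)$ precisely undoes the conditioning on $X$, so the same Gaussian tail integral appears either way --- but yours is more elementary (no Girsanov/Novikov machinery) while the paper's emphasises the martingale structure of $(1/\Phi_t(k))$. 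Your explicit monotonicity argument also supplies the justification for uniqueness of $\hat{\varsigma}(k)$ that the paper leaves implicit in ``computing the critical value'' (modulo the existence caveat that $K/P_{t1}$ must lie in the range of $X$, which neither you nor the paper addresses); and your direct Gaussian bookkeeping makes it straightforward to track how the factors of $\hat{\s}_t(k)$ should be distributed between the two terms in the argument of $\mathcal{N}$, a point worth checking carefully against the paper's convention $\xi^{(i)}_t=tX+\s_i\b^{(i)}_t$.
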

\begin{proof}
Let $\overline{X}_t(k) = P_{t1}\E[X\,|\,\hat{\xi}_t(k)]$,
which is the price of the asset given that $\boldsymbol{\tilde{J}}_t$ is in state $k\in\S$. That is, since $\hat{\xi}_t$ has to be in one of the $2^n$ states at any time, $k$ should be understood as an identifier of the active and inactive coordinates of $\boldsymbol{\xi}_t$. 
Then, the price of the option is given by the following:
\begin{align}
C_{0}=\sum_{k\in\S}C_0(k)\P(\boldsymbol{\tilde{J}}_t=k), \nonumber
\end{align}
for $t\in(0,1)$, where $C_0(k)=P_{0t}\E\left[(\overline{X}_t(k)-K)^{+}\right]$.
This follows from the law of total expectation, where we have
\begin{align}
\E\left[(\overline{X}_{t}-K)^{+}\right]&=\E\left[\E\left[\left.(\overline{X}_{t}-K)^{+} \,\right|\, \boldsymbol{\tilde{J}}_{t}\right]\right] \notag \\
&=\sum_{k\in\S}\E\left[\left.(\overline{X}_{t}-K)^{+} \,\right|\, \boldsymbol{\tilde{J}}_{t}=k\right]\P(\boldsymbol{\tilde{J}}_{t} = k) \notag \\
&=\sum_{k\in\S}\E\left[(\overline{X}_{t}(k)-K)^{+}\right]\P(\boldsymbol{\tilde{J}}_{t}= k), \nonumber
\end{align}
since $(\boldsymbol{\xi}_t)$ is Markovian and independent of the state process $(\boldsymbol{\tilde{J}}_t)$, and $\eta_t=1$ for all $t\in\mathbb{T}$.
We can write the conditional distribution as an explicit function of $k\in\S$,
\begin{align}
\nu_{t}(\dd x;k)=\frac{\exp\left[\frac{1}{(1-t)}\hat{\sigma}_t^{-2}(k)\left(x\hat{\xi}_{t}(k)-tx^2/2\right)\right]\nu(\dd x)}{\int_{\mathbb{X}}\exp\left[\frac{1}{(1-t)}\hat{\sigma}_t^{-2}(k)\left(x\hat{\xi}_{t}(k)-tx^2/2\right)\right]\nu(\dd x)}, \nonumber
\end{align}
for $t\in(0,1)$. Following similar steps as in \cite{4} while decomposing with respect to the state $k\in\S$, we have
\begin{align}
C_{0}(k)=P_{0t}\E\left[\frac{1}{\Phi_{t}(k)}\left(\int_{\mathbb{X}}(P_{t1}x-K)\chi_{t}(x;k)\nu(\dd x)\right)^{+}\right], \nonumber
\end{align}
where $\Phi_{t}(k)=\int_{\mathbb{X}}\chi_{t}(x;k)\nu(\dd x)$ and
\[
\chi_{t}(x;k)=\exp\left(\frac{x \hat{\xi}_{t}(k)-tx^2/2}{\hat{\sigma}_t^{2}(k)(1-t)}\right).
\] 
Then, for $t\in(0,1)$, we can define the measure $\mathbb{B}$ on $(\Omega,\mathcal{G},\{\mathcal{G}_{t}\})$ through a sequence of Radon-Nikodym derivatives
\begin{equation}
\left\{\left.\frac{\d \mathbb{B}}{\d \mathbb{P}}\right | _{{\sigma((\hat{\xi}_{s}(k))_{0\leq s \leq t})}}\right\}_{k\in\S}=\left\{\frac{1}{\Phi_{t}(k)}\right\}_{k\in\S}, \nonumber
\end{equation}
This follows because the process $(1/\Phi_{t}(k))$ is a $\P$-martingale; that is,
\begin{align}
\E\left[1/\Phi_{t}(k)|\hat{\xi}_{s}(k)\right]=1/\Phi_{s}(k)
\end{align}
for $s<t$, and also $\Phi_{0}(k)=1$ and $\Phi_{t}(k)>0$. 
In particular,
\begin{align}
(\Phi_{t}(k))^{-1}=\exp\left(-\int^{t}_{0}\frac{\E\left[X|\hat{\xi}_s(k)\right]}{\hat{\sigma}_s(k)(1-s)}\d W_{s}(k)-\frac{1}{2}\int^{t}_{0}\frac{\E\left[X|\hat{\xi}_{s}(k)\right]^2}{\hat{\sigma}_s^2(k)(1-s)^{2}}\dd s \right), \nonumber
\end{align}
and the Novikov's condition
\begin{align}
\E\left[\exp\left(\frac{1}{2}\int^{t}_{0}\frac{\E\left[X|\hat{\xi}_{s}(k)\right]^2}{\hat{\sigma}^2_s(k)(1-s)^{2}}\dd s \right)\right]<\infty, \notag
\end{align}
is satisfied. Under the measure $\mathbb{B}$, the random variable $\hat{\xi}_{t}(k)$ is Gaussian and we have
\begin{align}
C_{0}&=P_{0t}\sum_{k\in\S}\P(\boldsymbol{\tilde{J}}_{t} = k)\E^{\mathbb{B}}\left[\left(\int_{\mathbb{X}}(P_{t1}x-K)\chi_{t}(x;k)\nu(\dd x)\right)^{+}\right]. \nonumber
\end{align}
The statement follows by computing the critical value $\hat{\varsigma}(k)$.  
\end{proof}

The option price can be represented as the weighted sum of Black-Scholes-Merton prices induced by the various combinations of active information processes.
The functional form of the price is very similar to that presented in \cite{19} where jumps are assumed to be driven by a Poisson process. In our framework, the jump-diffusion dynamics of the price process emerges from the nature of the market information (i.e. \textit{endogenous}), the jump distribution is not specified {\it a priori} and it corresponds to the much larger class of random counting measures. 

\begin{ex}
As a special case, we can choose the underlying random point field to generate an independent Poisson process with intensity $\lambda$ such that we have
\begin{align}
C_{0}&=P_{0t}\sum_{k=1}^\infty\frac{e^{-\lambda t}(\lambda t)^{k-1}}{(k-1)!}\int_{\mathbb{X}}x\mathcal{N}\left(-\hat{z}_{t}(k)+\frac{x\sqrt{t}}{\hat{\sigma}_t^2(k)\sqrt{(1-t)}}\right)\nu(\dd x) \nonumber \\
&-P_{0t}\sum_{k=1}^\infty\frac{e^{-\lambda t}(\lambda t)^{k-1}}{(k-1)!}K\int_{\mathbb{X}}\mathcal{N}\left(-\hat{z}_{t}(k)+\frac{x\sqrt{t}}{\hat{\sigma}_t^2(k)\sqrt{(1-t)}}\right)\nu(\dd x),\nonumber
\end{align}
\end{ex}

We refer to \cite{9} and \cite{10} for partial integro-differential equations and viscosity solutions that offer alternative techniques for option pricing with respect to jump-diffusion dynamics.
\subsection{Information asymmetry and market competition}
We consider a setting in which there are two market agents who are unaware of each other's actions and have differing access to a fixed universe of information sources.
We here allude to a randomly evolving competition between agents, where some may have an informational advantage over a period of time and suddenly find that the wheel has turned with them having to come to terms with several of their information sources being disabled. 

Let $(\boldsymbol{\tilde{J}}_t^{(1)})\in\S$ and $(\boldsymbol{\tilde{J}}_t^{(2)})\in\S$, which are $\mathcal{G}_t$-adapted c\`adl\`ag jump processes of Agents 1 and 2, respectively. The jump processes are generated by two random point fields $\zeta^{(1)}$ and $\zeta^{(2)}$, both independent of $(\boldsymbol{\xi}_t)$.
We define the sub-algebras $\F_t^{(j)}\subseteq \mathcal{G}_t$ by
\[ \F_t^{(j)} = \s\left( (\boldsymbol{\tilde{J}}_u^{(j)} \otimes \boldsymbol{\xi}_u)_{0\leq u \leq t}, (\boldsymbol{\tilde{J}}_u^{(j)})_{0\leq u \leq t}, X1_{\{t=T\}}  \right), \]
for $j=\{1,2\}$.
Then we introduce $\nu_t^{(j)}(A) = \P[X \in A \,|\, \F_t^{(j)}]$,
where, if we use Proposition \ref{effectivecomplementary}, we have $\nu_t^{(j)}(A) = \P[X \in A \,|\, \hat{\xi}_t^{(j)}, \eta_t^{(j)}]$.
We note that $\P[\hat{\xi}_t^{(1)}=\hat{\xi}_t^{(2)}]<1$ and $\P[\eta_t^{(1)}=\eta_t^{(2)}]<1$, since the set-valued processes $(\mathcal{J}_t^{(j)})$, which define the effective and complementary information processes of the agents, are different. To model the information asymmetry in a dynamic competition between the agents, we shall use $f$-divergences; see \cite{1a, 3, 11}. We can define a symmetric $f$-divergence between equivalent probability measures $\P^{(1)}$ and $\P^{(2)}$ as
\begin{equation}
\Delta_{f}\left[\P^{(1)}||\,\P^{(2)}\right]=\frac{1}{2}\left[\int_{\Omega}f\left(\frac{\dd \P^{(1)}(\omega)}{\dd \P^{(2)}(\omega)}\right)\dd \P^{(2)}(\omega) + \int_{\Omega}f\left(\frac{\dd \P^{(2)}(\omega)}{\dd \P^{(1)}(\omega)}\right)\dd \P^{(1)}(\omega)\right],\nonumber
\end{equation}
for $\omega\in\Omega$, where $f$ is a convex function that satisfies $f(1)=0$, and where $\dd \P^{(1)}/\dd \P^{(2)}$ is the Radon-Nikodym derivative. Alternatively, a symmetric $f$-divergence can be defined in terms of probability densities, assuming that they exist. For convenience, we assume that $X$ has a density and write $\nu_t^{(j)}(\dd x) = p_t^{(j)}(x)\dd x$ such that $p_t^{(j)}(x)>0$ for $t\in\mathbb{T}_{*}$ and $x\in\mathbb{X}$.

As an example, we shall utilise Kullback-Leibler divergence (relative entropy), which is not a distance metric on the space of probability distributions, but measures the information gain when moving from a prior distribution to a posterior distribution. 
For $t\in\mathbb{T}_{*}$,
\begin{align}
\Delta_{\text{KL}}\left[p_t^{(1)} ||\, p_t^{(2)}\right]&=\frac{1}{2}\int_{\mathbb{X}}\left[p_t^{(1)}(x)\log\left(\frac{p_t^{(1)}(x)}{p_t^{(2)}(x)}\right)+p_t^{(2)}(x)\log\left(\frac{p_t^{(2)}(x)}{p_t^{(1)}(x)}\right)\right]\dd x.\nonumber
\end{align}
We represent the probability density functions explicitly as a function of the state; $p_t^{(j)}(x;k_j)$ for $\boldsymbol{\tilde{J}}^{(j)}_t = k_j$,
where $k_j\in\S$ for $j=\{1,2\}$. Thus, if $\xi^{(i)}_t \law t X + \s_i \b_t^{(i)}$, we have $p_t^{(j)}(x;k_j)\dd x = \P[X \in \dd x \,|\, \hat{\xi}_t(k_j),\eta_t^{(j)}(k_j)]$.
Then, by defining the processes $(A_t(k_1,k_2))$ and $(B_t(k_1,k_2))$ by
\begin{align}
&A_t(k_1,k_2)=\int_{\mathbb{X}}\log\left(\frac{p_t^{(1)}(x;k_1)}{p_t^{(2)}(x;k_2)}\right)\nu_t^{(j)}(\dd x;k_1), 
&B_t(k_1,k_2)=\int_{\mathbb{X}}\log\left(\frac{p_t^{(2)}(x;k_2)}{p_t^{(1)}(x;k_1)}\right)\nu_t^{(2)}(\dd x;k_2), \nonumber
\end{align}
respectively, we may write 
\begin{align}
2\Delta_{\text{KL}}\left[p_t^{(1)} || p_t^{(2)}\right]&=\sum_{k_1\in\S}\sum_{k_2\in\S}\left(A_t(k_1,k_2) +
B_t(k_1,k_2) \right)\1\{\boldsymbol{\tilde{J}}_{t}^{(1)}= k_1\}\1\{\boldsymbol{\tilde{J}}_{t}^{(2)}= k_2\}.\nonumber
\end{align}
As a geometric measure for information asymmetry, since $\sqrt{p^{(j)}_{t}(x)}$ for $j=\{1,2\}$ determines a point on $\mathcal{S}^+\in\mathcal{L}^{2}$, we can define the angle process $\{\Theta^{1,2}_{t}\}_{0\leq t < T}$ by the $\mathcal{L}^{2}$-inner product,
\begin{align} \label{eq:bhatcossh}
\cos \Theta^{1,2}_{t}(k_1,k_2) &= \frac{\left\langle \sqrt{p^{(1)}_{t}(x;k_1)},\sqrt{p^{(2)}_{t}(x;k_2)} \right\rangle}{\left\langle \sqrt{p^{(1)}_{t}(x;k_1)},\sqrt{p^{(1)}_{t}(x;k_1)} \right\rangle^{\frac{1}{2}} \left\langle \sqrt{p^{(2)}_{t}(x;k_2)},\sqrt{p^{(2)}_{t}(x;k_2)} \right\rangle^{\frac{1}{2}}} \notag \\
&=\int_{\mathbb{X}}\sqrt{p^{(1)}_{t}(x;k_1)}\sqrt{p^{(2)}_{t}(x;k_2)}\d x\notag \\
&=1-\frac{1}{2}\int_{\mathbb{X}}\left(\sqrt{p^{(1)}_{t}(x;k_1)}-\sqrt{p^{(2)}_{t}(x;k_2)}\right)^{2}\d x, \notag
\end{align}
since the $\mathcal{L}^{2}$-norm of $\sqrt{p^{(j)}_{t}(x)}$ equals unity, and $\mathcal{S}\in\mathcal{L}^{2}$ is a Riemannian manifold when equipped with the inner product $\langle . \, ,. \rangle$. Here, $\Theta^{1,2}_{t}(k_1,k_2)=\Theta^{2,1}_{t}(k_2,k_1)$ is the Bhattacharyya angle between $p^{(1)}_t$ and $p^{(2)}_t$; the angle from the center of $\mathcal{S}$ subtended to the endpoints on $\mathcal{S}^{+}$.
\begin{align}
\cos \Theta^{1,2}_{t}&=\sum_{k_1\in\S}\sum_{k_2\in\S}\cos \Theta^{1,2}_{t}(k_1,k_2)\1\{\boldsymbol{\tilde{J}}_{t}^{(1)}= k_1\}\1\{\boldsymbol{\tilde{J}}_{t}^{(2)}= k_2\}.\nonumber
\end{align}
These are useful representations to derive SDEs for the Kullback-Leibler $\Delta_{\text{KL}}$ and spherical $\Theta$ asymmetry processes via Proposition \ref{measureSDE}.
These processes jump every time one of the agents gains access to a new source of information. If both agents have the same information at a given time, that is, if $\nu_t^{(1)}(\dd x;k_1)=\nu_t^{(2)}(\dd x;k_2)$ for some $t\in\mathbb{T}_{*}$, then the processes are zero at that time. The study of information asymmetry (see, for example \cite{1,2,18}) has benefited substantially from the theory of enlargements of filtrations; see \cite{17} and \cite{22}. 
\begin{rem}
The activation and deactivation of new information sources at random times can be understood as a dynamic interplay between enlargements of filtrations and stopped filtrations. 
\end{rem}
We leave a formal treatment of this remark and a corresponding rigorous application of information asymmetry to market competition with multiple agents for future research.

\subsection{Conclusions}
In order to model information switches, we have worked in the foregoing with filtrations generated by dynamically modulated multivariate \levy random bridges. In the case of Brownian random bridges, we have constructed what we call effective and complementary information processes, which provide us with analytical tractability to prove a dynamical representation for the conditional expectation martingale in terms of standard Brownian motions. The effective process is a one-dimensional aggregation of all active information sources and the complementary process reflects the information supplied by now inactive sources.  As an application, we have worked out the price of a vanilla option under the modulated information system, and we have highlighted the similarity of its functional form with that obtained by \cite{19}. Our second application addresses information asymmetry between agents whose access to information sources differ.  
We have concluded by producing an example of modulated information and the associated endogenous jump-diffusion dynamics. In the figure below, we show a simulated path of the conditional expectation process $(X_t)$ given in Proposition \ref{ExpectationSDE}. This jump-diffusion process jumps whenever an information coordinate is activated. At each point in time a Markov chain dictates which information coordinates are active.  
Because of the permanent nature of each ``shock'' given to $(X_t)$, whenever an information source is activated, we speak of a {\it permanent impact} on the value of $(X_t)$. The inclusion of a temporary impact on $(X_t)$ is a suggestion for future research, where the informational contribution of a particular information coordinate (source) ``fades away'' over time once the source is deactivated.
\begin{figure}[H]
\begin{center}
	\includegraphics[scale=0.55]{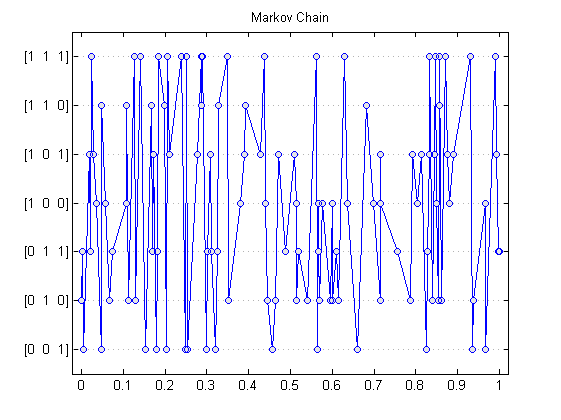}
	\includegraphics[scale=0.55]{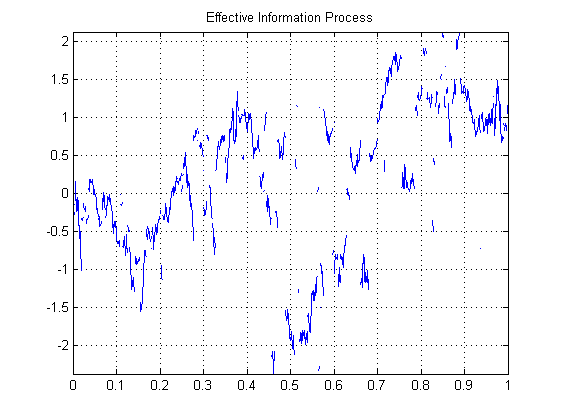}
	\\
	\includegraphics[scale=0.55]{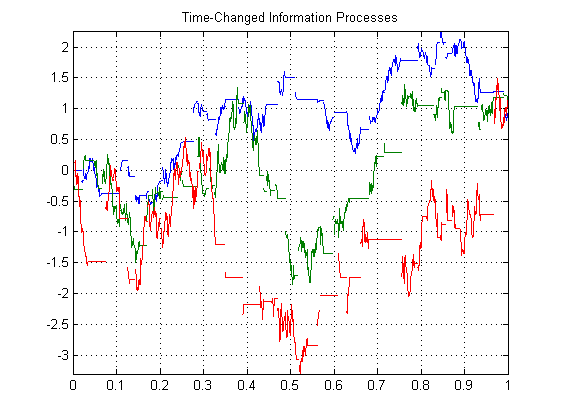}
	\includegraphics[scale=0.55]{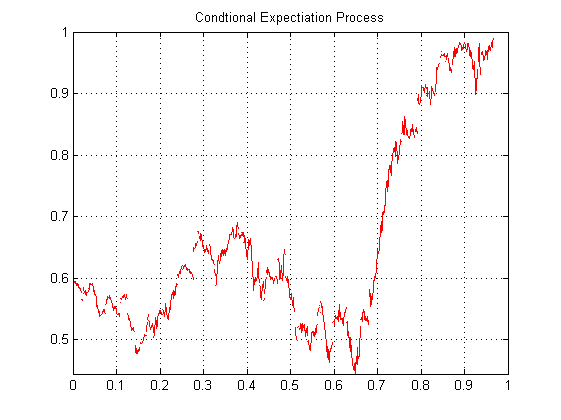}
	\caption{Simulation of endogenous jump-diffusion system (one path). Here, an example is shown whereby the random point field is illustrated by a Markov chain. Anti-clockwise, (a) the Markov chain, (b) the time-changed---or modulated---information process, (c) the effective information process, and (d) the endogenised jump-diffusion $X_t=\E[X_T\,\vert\,\F_t]$, for $0\le t <1$.}
\end{center}	
\end{figure}
\vspace{-1cm}
\section*{Acknowledgments}
The authors thank J. Akahori, G. W. Peters, J. Sekine, peer-reviewers, and participants in the Osaka-UCL Workshop on Stochastics, Numerics and Risk (March 2017), which was co-sponsored by The Daiwa Anglo-Japanese Foundation, participants in the 6th International Conference, Mathematics in Finance, RSA (August 2017), and in the Seminar of the Actuarial Mathematics \& Statistics Department, Heriot-Watt University (May 2019), for comments and suggestions. 

\end{document}